\newtheorem{theorem}{Theorem}[section]
\newtheorem{lemma}[theorem]{Lemma}
\newtheorem{proposition}[theorem]{Proposition}
\newtheorem{corollary}[theorem]{Corollary}
\newtheorem{fact}[theorem]{Fact}
\theoremstyle{definition}
\newtheorem*{definition}{Definition}
\newtheorem{convention}[theorem]{Convention}
\theoremstyle{remark}
\newtheorem{claim}{Claim}
\newcommand\setsep{;\ }
\def\er{\mathbb R}
\def\O{\mathcal{O}}
\def\N{\mathcal{N}}
\def\F{\mathcal F}
\def\B{\mathcal B}
\def\en{\mathbb N}
\def\C{\mathcal{C}}
\def\ov{\overline}
\def \exp {\operatorname{exp}}
\def \rng {\operatorname{Rng}}
\def \dom {\operatorname{Dom}}
\def \sspan {\operatorname{span}}
\begin{document}
\title{Monotone retractability and retractional skeletons}
\author{Marek C\'uth and Ond\v{r}ej F.K. Kalenda}
\address{Department of Mathematical Analysis \\
Faculty of Mathematics and Physic\\ Charles University\\
Sokolovsk\'{a} 83, 186 \ 75\\Praha 8, Czech Republic}
\email{marek.cuth@gmail.com}
\email{kalenda@karlin.mff.cuni.cz}
\subjclass[2010]{54C15, 54D30, 46B26}
\thanks{Our research was supported by the grant GA \v{C}R P201/12/0290.}
\keywords{retraction, monotonically retractable space, Corson compact space, retractional skeleton}
\begin{abstract}We prove that a  countably compact space is monotonically retractable if and only if it has a full retractional skeleton.
In particular, a compact space is monotonically retractable if and only if it is Corson. 
This gives an answer to a question of R. Rojas-Hern{\'a}ndez and V. V. Tkachuk.
Further, we apply this result to characterize retractional skeleton using a topology on the space of continuous functions, answering thus a question  of the first author and a related question of W. Kubi\'s.\end{abstract}
\maketitle

\section{Introduction and main results}
Spaces with a rich family of retractions often occur both in topology and functional analysis.
For example, systems of retractions were used by  Amir and Lindenstrauss to characterize Eberlein compact spaces \cite{amirLind} or by Gul$'$ko \cite{gul} to prove that a compact space $K$ is Corson whenever $\C_p(K)$ has the Lindel\"of $\Sigma$-property. This line of research
continued for a long time (for a survey see e.g. \cite{kalendaSurvey} and Chapter 19 of \cite{kubisKniha}). The optimal notion of an indexed system
of retractions in this area was defined in \cite{kubisSmall}. We slightly generalize this notion to a more general situation -- we consider countably compact spaces, not only compact ones.

\begin{definition}A \textit{retractional skeleton} in a countably compact space $X$ is a family of continuous retractions $\mathfrak{s} = \{r_s\}_{s\in\Gamma}$, indexed by an up-directed partially ordered set $\Gamma$, such that
\begin{enumerate}[\upshape (i)]
	\item $r_s[X]$ is a metrizable compact for each $s\in\Gamma$,
	\item $s,t\in\Gamma$, $s\leq t \Rightarrow r_s = r_s\circ r_t = r_t\circ r_s$,
	\item given $s_0 < s_1 < \cdots$ in $\Gamma$, $t = \sup_{n\in\omega}s_n$ exists and $r_t(x) = \lim_{n\to\infty}r_{s_n}(x)$ for every $x\in X$,
	\item for every $x\in X$, $x = \lim_{s\in\Gamma}r_s(x)$.
\end{enumerate}
We say that $D(\mathfrak{s}) = \bigcup_{s\in\Gamma}r_s[X]$ is the \textit{set induced by the retractional skeleton} $\mathfrak{s}$ in $X$.\\
If $D(\mathfrak{s}) = X$ then we say that $\mathfrak{s}$ is a \textit{full retractional skeleton}.
\end{definition}

Let us point out that the condition (i) in the definition of a retractional skeleton is equivalent to
\begin{itemize}
	\item[(i')] $r_s[X]$ has a countable network for each $s\in\Gamma$.
\end{itemize}
Indeed, any metrizable compact has a countable network. Conversely, if $r$ is any retraction on $X$, then $r[X]$ is a closed subset of $X$, hence it is countably compact. If it has countable network, it is Lindel\"of  and hence compact. Finally, a compact with countable network is metrizable.

Another type of a structured system of retractions was recently introduced in \cite{roj14}. 

\begin{definition}
A space $X$ is \emph{monotonically retractable} if we can assign to any countable set $A\subset X$ a retraction $r_A$ and a countable set $\N(A)$
such that the following conditions are fulfilled:
\begin{itemize}
	\item $A\subset r_A[X]$.
	\item The assignment $A\mapsto \N(A)$ is $\omega$-monotone, i.e.,
	\begin{itemize}
  	\item[(i)] if $A\subset B$ are countable subsets of $X$, then $\N(A)\subset\N(B)$;
  	\item[(ii)] if $(A_n)$ is an increasing sequence of countable subsets of $X$, then $\N(\bigcup_{n=1}^\infty A_n)=\bigcup_{n=1}^\infty \N(A_n)$.
 \end{itemize}
  \item $\N(A)$ is a network of $r_A$, i.e. $r_A^{-1}(U)$ is the union of a subfamily of $\N(A)$ for any open set $U\subset X$.
\end{itemize}
\end{definition}

Our main result is the following characterization of monotonically retractable countably compact spaces using the notion of a retractional skeleton.

\begin{theorem}\label{t:charactFullSkeleton}
A countably compact space is monotonically retractable if and only if it has a full retractional skeleton.
\end{theorem}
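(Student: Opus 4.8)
The plan is to prove both implications by directly constructing one structure from the other, using the common feature that both notions are built on countable approximations indexed in an $\omega$-monotone way.

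<br>

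For the easier direction — full retractional skeleton implies monotonically retractable — I would try to define, for a countable set $A\subset X$, an appropriate index $s(A)\in\Gamma$ and set $r_A=r_{s(A)}$. The natural idea is to take $s(A)$ to be a supremum (or a limit along a countable increasing chain built from $A$) of indices whose retracts cover $A$; condition (iii) of the skeleton guarantees such countable suprema exist, and I would want $r_{s(A)}[X]$ to contain $A$. For the network $\N(A)$, since $r_A[X]$ is metrizable compact, I can fix a countable base of $r_A[X]$ and pull it back via $r_A$ to obtain a countable network; the $\omega$-monotonicity of $A\mapsto\N(A)$ should follow from choosing $s(A)$ in a monotone fashion. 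The main care here is the "continuity at countable unions'' requirement (ii), which I expect to match the skeleton's limit condition (iii), and the fact that $D(\mathfrak s)=X$ is needed precisely to guarantee $A\subset r_A[X]$ for \emph{every} countable $A$.

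<br>

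For the harder direction — monotonically retractable implies a full retractional skeleton — I would take $\Gamma$ to be the family of countable subsets $A\subset X$ for which the retraction $r_A$ behaves coherently (e.g. $A=r_A[X]\cap(\text{a suitable countable dense set})$, or $A$ ``closed'' under the assignment), ordered by inclusion. The retractions $r_A$ are the candidates for $r_s$. I would need to verify the commutation relation (ii): if $A\subset B$ then $r_A=r_A\circ r_B=r_B\circ r_A$. This is where the network condition on $\N(A)$ and its monotonicity should be exploited — the network lets one read off the retraction from its level sets, and monotonicity of $\N$ should force the retractions to be compatible. Condition (iii) for increasing sequences should follow from the $\omega$-monotone condition (ii) of the definition (that $\N$ commutes with increasing countable unions), giving $\N(\bigcup A_n)=\bigcup\N(A_n)$ and hence $r_{\bigcup A_n}=\lim r_{A_n}$. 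Condition (iv), that $x=\lim_s r_s(x)$, together with fullness $D(\mathfrak s)=X$, should come from the requirement $A\subset r_A[X]$ applied to larger and larger $A$ containing a neighborhood-witness of $x$.

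<br>

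The main obstacle I anticipate is verifying the commutation identities (ii) and the limit identity (iii) for the constructed skeleton, i.e.\ turning the purely ``bookkeeping'' network conditions into genuine algebraic relations among the retractions $r_A$. In particular, proving $r_A\circ r_B=r_A$ for $A\subset B$ requires showing that applying the finer retraction first and then the coarser one recovers the coarser one, which is not automatic from the definition of $r_A$ alone and must be extracted from how $\N(A)$ sits inside $\N(B)$ as a network. I would also need to ensure that each relevant index set $A$ can be enlarged to a countable $A'$ that is genuinely ``closed'' (a closure/elementary-submodel-type argument), so that the chosen $\Gamma$ is up-directed and the suprema in (iii) land back in $\Gamma$; managing this closure process while respecting $\omega$-monotonicity is the technical heart of the argument. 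The countable compactness hypothesis will presumably be used to guarantee that the retracts $r_A[X]$, which have countable network, are in fact compact metrizable as required by (i), via the equivalence (i)$\Leftrightarrow$(i$'$) noted in the excerpt.
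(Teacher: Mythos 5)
Your proposal has genuine gaps in both directions, and in each case the gap is precisely the point you flag as ``the main obstacle'' without supplying a mechanism to overcome it. In the direction ``skeleton $\Rightarrow$ monotonically retractable'', the problem is the $\omega$-monotonicity of $A\mapsto s(A)$ and of the networks: $\Gamma$ is an abstract up-directed poset, so there is no canonical choice of an upper bound for a countable set of indices, and even if one arranges $s(A)\leq s(B)$ for $A\subset B$, pulling back a countable base of $r_{s(A)}[X]$ via $r_{s(A)}$ and a base of $r_{s(B)}[X]$ via $r_{s(B)}$ produces two unrelated families, so $\N(A)\subset\N(B)$ fails; the requirement $\N(\bigcup_n A_n)=\bigcup_n\N(A_n)$ is more demanding still, as it forces $s(\bigcup_n A_n)$ to be exactly $\sup_n s(A_n)$ for every increasing sequence, which no ad hoc choice function achieves. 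In the converse direction, the retractions $r_A$ given by the definition of monotone retractability carry \emph{no} compatibility whatsoever: nothing in the definition relates $r_A$ to $r_B$ for $A\subset B$, and the network condition (which in this paper only says $r_A^{-1}(U)$ is a union of members of $\N(A)$) does not determine $r_A$ uniquely from $\N(A)$, so the inclusion $\N(A)\subset\N(B)$ cannot be ``converted'' into the identities $r_A=r_A\circ r_B=r_B\circ r_A$. One must \emph{replace} the given retractions by canonical ones, and your sketch contains no device for doing so.

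The paper's actual route supplies exactly the missing machinery, and it is worth seeing why. It interpolates a model-theoretic condition (ii): for any suitable elementary model $M$, $\C(X)\cap M$ separates the points of $\ov{X\cap M}$. The implication (iii)$\Rightarrow$(ii) is a short argument using $\omega$-monotonicity of $\N$ and normality. For (ii)$\Rightarrow$(iii), the canonical retraction is built in Lemma~\ref{l:retrakce} from the evaluation map $\Phi:X\to\er^{\C(X)\cap M}$: one shows $\Phi[X]=\Phi[\ov{X\cap M}]$ and sets $r=\left(\Phi\restriction_{\ov{X\cap M}}\right)^{-1}\circ\Phi$, which satisfies $f\circ r=f$ for all $f\in\C(X)\cap M$; the separation property makes $r$ uniquely determined by the countable function family, the network comes from the explicit formula of Lemma~\ref{l:networkForRetract} (finite intersections of rational-interval preimages, which \emph{is} $\omega$-monotone in $S$), and the needed monotone choice of models is exactly what a Skolem function (Lemma~\ref{l:BasicSkolem}) provides --- this is the ``closure process respecting $\omega$-monotonicity'' you correctly anticipated but did not construct. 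Finally, (i)$\Leftrightarrow$(ii) is not proved by hand: the paper passes to $K=\beta X$ and invokes the known compact-space theorem of Kubi\'s, the new content being Proposition~\ref{p:rSkeletonInBetaX} that $X$ has a full retractional skeleton iff it is induced by a retractional skeleton in $\beta X$; its ``only if'' part extends each $r_s$ to $\beta X$ and verifies the convergence condition (iv) by a nontrivial argument using sequential compactness of the induced set --- a step absent from your plan. In short, your high-level intuition about where the difficulties lie is sound, but both constructions fail as stated, and the repair is the elementary-submodel formulation together with the reduction to $\beta X$ rather than any direct manipulation of the given retractions.
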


Since a compact space has a full retractional skeleton if and only if it is Corson \cite[Theorem 3.11]{cuthCMUC}, we get 
the following positive answer to Question 6.1 of \cite{tkaRoj}.

\begin{corollary}\label{t:charactCorson}
A compact space is monotonically retractable if and only if it is Corson.
\end{corollary}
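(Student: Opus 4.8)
The plan is to deduce the corollary immediately by composing two equivalences. Since every compact space is countably compact, Theorem~\ref{t:charactFullSkeleton} applies verbatim and tells us that a compact space is monotonically retractable if and only if it possesses a full retractional skeleton. On the other hand, \cite[Theorem~3.11]{cuthCMUC} asserts that a compact space has a full retractional skeleton exactly when it is Corson. Transitively chaining these two biconditionals produces the desired characterization.

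Concretely, I would first note that the hypothesis of Theorem~\ref{t:charactFullSkeleton} is satisfied, compactness being strictly stronger than countable compactness, so no separate argument is required to enter our setting. I would then quote the two statements and observe that they concern the same class of compact (Hausdorff) spaces and the same notion of a full retractional skeleton, which in the compact case coincides with the classical one of \cite{kubisSmall} underlying \cite{cuthCMUC}.

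There is essentially no obstacle here: the entire content of the corollary is carried by the main theorem together with the cited Corson characterization, and the proof is a one-line concatenation. The only point meriting a moment's care is the compatibility of definitions --- confirming that the full retractional skeleton of our Definition restricts, on compact spaces, to the object characterized in \cite{cuthCMUC} --- but this is immediate upon inspecting the two definitions, so I expect the proof to be genuinely trivial given the machinery already established.
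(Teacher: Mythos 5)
Your proposal is correct and is exactly the paper's own argument: since a compact space is countably compact, Theorem~\ref{t:charactFullSkeleton} reduces monotone retractability to having a full retractional skeleton, and \cite[Theorem 3.11]{cuthCMUC} identifies that property with being Corson. (The paper additionally remarks that one direction follows already from Proposition~\ref{p:charactCorson} without the full strength of the main theorem, but the official derivation is the same one-line concatenation you give.)
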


As another corollary we obtain the following result from \cite{roj14}.

\begin{corollary}\label{c:roj14}
Any first countable countably compact subspace of an ordinal is monotonically retractable.
\end{corollary}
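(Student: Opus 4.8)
The plan is to construct a full retractional skeleton on $X$ and then invoke Theorem~\ref{t:charactFullSkeleton}. Fix an embedding $X\subseteq\mu$ into an ordinal $\mu$ with the order topology. First I would record two structural facts. Since $X$ is countably compact, every strictly increasing $\omega$-sequence in $X$ has an accumulation point in $X$, necessarily its supremum; hence $X$ is closed under suprema of countable subsets. Moreover, for a countable $S\subseteq\mu$ the closure $\overline{S}^\mu$ is countable (its limit points correspond injectively to proper initial segments of the countable well-ordered set $S$) and is bounded by $\sup S$, so $\overline{S}^\mu$ is a countable compact space, and a countable compact Hausdorff space is metrizable. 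Combining the two, for every countable $S\subseteq X$ we have $\overline{S}^\mu\subseteq X$, so $\overline{S}:=\overline{S}^\mu=\overline{S}^X$ is a metrizable compact subset of $X$.

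For a closed countable $C\subseteq X$ I would use the \emph{floor retraction} $r_C\colon X\to C$ given by $r_C(x)=\max(C\cap[0,x])$ when $x\geq\min C$ and $r_C(x)=\min C$ otherwise; it is the identity on $C$ and maps $X$ onto $C$. The heart of the argument is its continuity. Analysing the order topology, $r_C$ can fail to be continuous only at a limit ordinal $x\in C$ which is isolated from below inside $C$ while $X\cap x$ is cofinal in $x$. To exclude this I restrict attention to the family $\Gamma$ of all countable closed $C\subseteq X$ satisfying
\begin{itemize}
\item[(b)] every limit ordinal of $C$ which is a limit of $X$ from below is a limit of $C$ from below,
\end{itemize}
ordered by inclusion. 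For $C\in\Gamma$ the obstruction disappears: if $x\in C$ is a limit ordinal with $X\cap x$ cofinal in $x$, then by (b) also $C\cap x$ is cofinal in $x$; and if $X\cap x$ is not cofinal then $x$ is isolated from below in $X$. Either way $r_C$ is continuous at $x$, and continuity at the remaining points is routine. By first countability such an $x$ has cofinality $\omega$, which is exactly what makes (b) attainable by adjoining a single $\omega$-sequence.

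It remains to verify that $(r_C)_{C\in\Gamma}$ is a full retractional skeleton. For directedness and fullness I would complete an arbitrary countable $S_0\subseteq X$ to a member of $\Gamma$ by the $\omega$-iteration $S_{n+1}=\overline{S_n}\cup\bigcup\{E_x : x\in\overline{S_n}\}$, where for each limit ordinal $x\in\overline{S_n}$ that is a limit of $X$ from below, $E_x\subseteq X\cap x$ is a chosen increasing $\omega$-sequence converging to $x$ (it exists and lies in $X$ by first countability), and finally $C=\overline{\bigcup_n S_n}$; each step is countable and stays inside $X$ by $\omega$-closedness, and one checks $C\in\Gamma$. This provides an upper bound for any two members of $\Gamma$ and shows $\bigcup_{C\in\Gamma}C=X$. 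Condition (i) is the structural fact above. Condition (ii) is an immediate computation: for $C\subseteq C'$ in $\Gamma$ one has $r_{C'}\circ r_C=r_C$ since $r_C(x)\in C\subseteq C'$, and $r_C\circ r_{C'}=r_C$ since $\max(C\cap[0,x])\in C'$ lies below $r_{C'}(x)=\max(C'\cap[0,x])$. For (iii), given $C_0\subset C_1\subset\cdots$ in $\Gamma$ one takes $C=\overline{\bigcup_n C_n}$, checks $C\in\Gamma$ and that it is the supremum, and verifies $r_C(x)=\sup_n r_{C_n}(x)=\lim_n r_{C_n}(x)$ by a short cofinality argument. Condition (iv) holds because $r_C(x)=x$ whenever $C\ni x$, and the members of $\Gamma$ containing a fixed $x$ are cofinal in $\Gamma$, so the net $(r_C(x))_{C\in\Gamma}$ is eventually equal to $x$. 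Thus $X$ carries a full retractional skeleton and Theorem~\ref{t:charactFullSkeleton} yields that $X$ is monotonically retractable. The main obstacle is the continuity of $r_C$: everything hinges on isolating condition (b) and on using first countability to realise it with countable data, while $\omega$-closedness keeps all the suprema inside $X$.
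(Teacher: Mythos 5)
Your proposal is correct and takes essentially the same route as the paper: the paper's (two-sentence) proof likewise observes that such a space admits a full retractional skeleton, pointing to the formula of \cite[Example 6.4]{kubisSmall} --- which is exactly the floor retraction $r_C(x)=\max(C\cap[0,x])$ you reconstruct, indexed by countable closed sets satisfying your condition (b) --- and then invokes Theorem~\ref{t:charactFullSkeleton}. You have simply carried out in detail the verification (continuity of $r_C$, directedness via the $\omega$-iteration, and skeleton axioms (i)--(iv)) that the paper delegates to the citation.
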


Indeed, it is enough to observe that any first countable countably compact subspace of an ordinal admits a full retractional skeleton.
To do that one can use the formula from \cite[Example 6.4]{kubisSmall}.

Theorem~\ref{t:charactFullSkeleton} will be proved in Section 4, it is an immediate consequence of Theorem~\ref{t:main}. Corollary~\ref{t:charactCorson}
is in fact easier, it follows already from Proposition~\ref{p:charactCorson}.

Further, we apply our results to prove the following `noncommutative' analogues of the results of \cite{kalendaCharact}. These theorems provide answers to Problem 1 of \cite{cuthCMUC} and Problem 1 of \cite{kubisSkeleton}. The topological property sought in the quoted problems is `to be monotonically Sokolov'. This class of spaces was introduced and studied in \cite{tkaRoj}, we recall the definition in the next section. 

\begin{theorem}\label{t:answerCuth}Let $K$ be a compact space and $D$ be a dense subset of $K$. Then the following two conditions are equivalent:
	\begin{enumerate}[\upshape (i)]
		\item $D$ is induced by a retractional skeleton in $K$.
		\item $D$ is countably compact and $(\C(K),\tau_p(D))$ is monotonically Sokolov.
	\end{enumerate}
\end{theorem}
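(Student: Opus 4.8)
The plan is to reduce both implications to statements about the space $D$ alone and then invoke Theorem~\ref{t:charactFullSkeleton}. The bridge in one direction is the observation that if $D=D(\mathfrak s)$ for a retractional skeleton $\mathfrak s=\{r_s\}_{s\in\Gamma}$ in $K$, then $\{r_s\restriction_D\}_{s\in\Gamma}$ is a \emph{full} retractional skeleton in $D$: since $r_s[K]\subseteq D$ and $r_s$ is the identity on its range, one has $r_s[D]=r_s[K]$, which is compact metrizable, and $\bigcup_s r_s[D]=D$; conditions (ii)--(iv) restrict verbatim. The same computation, together with the existence of suprema of increasing chains in $\Gamma$, shows that every sequence in $D$ lies in a single compact metrizable range $r_t[K]$, so $D$ is (sequentially, hence) countably compact. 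Consequently, by Theorem~\ref{t:charactFullSkeleton}, condition (i) implies that $D$ is monotonically retractable; the content of the converse is to realize a skeleton of $D$ by genuine retractions of $K$.

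The second ingredient is the identification of the function space. Because $D$ is dense and $K$ is compact Hausdorff, the restriction map $f\mapsto f\restriction_D$ embeds $(\C(K),\tau_p(D))$ as a dense subspace of $\C_p(D)$ (density follows from Tietze's theorem applied to finitely many distinct points of $D$). Thus the theorem is in essence a `noncommutative' form of the known duality between monotone retractability of $D$ and the monotone Sokolov property of $\C_p(D)$, and the task is to transport this duality through the dense embedding and across the passage $K\leftrightarrow D$.

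For (i)$\Rightarrow$(ii) I would dualize the skeleton directly on the function space. For $s\in\Gamma$ set $R_s(f)=f\circ r_s$. Each $R_s$ is a $\tau_p(D)$-continuous retraction of $\C(K)$ (continuity uses $r_s[D]\subseteq D$, and idempotency uses $r_s\circ r_s=r_s$), and since $f\circ r_s$ depends only on $f\restriction_{r_s[K]}$ with $r_s[K]$ compact metrizable, the range $R_s[\C(K)]$ is homeomorphic to a subspace of $\C_p(r_s[K])$ and hence cosmic. Feeding a countable family $\mathcal F$ of closed subsets of $\C(K)$ into the monotone retractability of $D$ (take a countable $A\subseteq D$ coding $\mathcal F$, obtain $r_A$ and the network $\N(A)$), I would let the Sokolov retraction be precomposition with $r_A$ and build $\mathcal N(\mathcal F)$ from $\N(A)$; the $\omega$-monotonicity of $A\mapsto\N(A)$ then yields the required $\omega$-monotonicity, while the capturing property for each $F\in\mathcal F$ is read off from the network property of $r_A$. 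Verifying the capturing condition and assembling the network on the function side is the main bookkeeping here.

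For (ii)$\Rightarrow$(i) I would run the dualization backwards: from a monotone Sokolov structure on $(\C(K),\tau_p(D))$ together with the assumed countable compactness of $D$, recover the monotone retractability of $D$, and then apply Theorem~\ref{t:charactFullSkeleton} to produce a full retractional skeleton $\{\rho_s\}$ in $D$. The remaining, and I expect hardest, step is reconstruction: each $\rho_s$ has compact metrizable range $M_s\subseteq D\subseteq K$, and one must extend $\rho_s$ to a continuous retraction $\bar r_s\colon K\to M_s$ so that $\{\bar r_s\}$ is a retractional skeleton in $K$ with $D(\{\bar r_s\})=D$ exactly. Controlling these extensions, which are not automatic for an arbitrary dense set, together with the reverse transfer of the monotone Sokolov property through the dense embedding $(\C(K),\tau_p(D))\hookrightarrow\C_p(D)$, are the two points where the real work lies.
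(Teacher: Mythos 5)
Your reduction to the space $D$ itself is the right first move and matches the paper's strategy (restricting the skeleton to $D$ gives a full retractional skeleton, $D$ is countably compact, and Theorem~\ref{t:charactFullSkeleton} yields monotone retractability), but there is a genuine gap at the point you yourself flag as ``where the real work lies'': the missing idea is the identification $K=\beta D$, and without it neither of your transfer mechanisms is valid. In (i)$\Rightarrow$(ii), a set induced by a retractional skeleton satisfies $K=\beta D$ (Fact~\ref{f:basics}(iii), i.e.\ \cite[Theorem 32]{kubisSkeleton}); since $D$ is countably compact every $f\in\C(D)$ is bounded and extends to $\beta D=K$, so the restriction map is a \emph{homeomorphism} of $(\C(K),\tau_p(D))$ onto $\C_p(D)$, not merely a dense embedding, and the conclusion is immediate from Fact~\ref{f:mrs}(a). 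Your proposed alternative --- building the Sokolov structure on $(\C(K),\tau_p(D))$ by precomposition with the retraction $r_A$ coming from monotone retractability of $D$ --- is not even well defined on $\C(K)$: $r_A$ is a retraction of $D$ only, so $f\circ r_A$ lives on $D$ and returns to $\C(K)$ only if it extends, which again requires $\beta D=K$. More generally, ``transporting the duality through the dense embedding'' is not a legitimate step: the monotone Sokolov property passes to \emph{closed} subspaces (Fact~\ref{f:mrs}(c)), and no preservation under dense subspaces, in either direction, is available.

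In (ii)$\Rightarrow$(i) the same two holes remain open in your sketch. First, recovering monotone retractability of $D$ is not done by a ``reverse transfer'' across the dense embedding but by the closed evaluation embedding $\Psi:D\to\C_p(\C(K),\tau_p(D))$: the target is monotonically retractable by Fact~\ref{f:mrs}(b), and $\Psi$ carries closed subsets of the countably compact space $D$ to countably compact, hence closed (Fact~\ref{f:mrs}(h)), subsets, so $\Psi$ is a closed embedding and Fact~\ref{f:mrs}(c) applies --- note how the hypothesis that $D$ is countably compact enters here. Second, for the reconstruction step you correctly observe that extending the skeleton $\{\rho_s\}$ from $D$ to $K$ ``is not automatic for an arbitrary dense set,'' but you offer no mechanism. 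The paper's mechanism is again $\beta D=K$: the monotone Sokolov property gives that $(\C(K),\tau_p(D))$ is Lindel\"of (Fact~\ref{f:mrs}(e)), the full skeleton gives that $D$ is countably closed in $K$, and \cite[Proposition 2.13]{kalendaCharact} then yields $\beta D=K$; after that, Proposition~\ref{p:rSkeletonInBetaX} extends the full skeleton of $D$ (via Stone--\v{C}ech extensions of the individual retractions) to a retractional skeleton in $\beta D=K$ inducing exactly $D$. Until you prove $\beta D=K$ in both directions, your argument does not close; with it, everything you describe collapses to the paper's proof.
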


This theorem will be proved in the last section. The next one is its Banach-space counterpart. Projectional skeleton is Banach-space analogue of retractional skeleton, these notions are dual in a sense. For exact definitions and details see \cite{kubisSkeleton} or \cite{cuthSimul}. 

\begin{theorem}\label{t:answerKubis}Let $E$ be a Banach space and $D\subset E^*$ a norming subspace. Then the following two conditions are equivalent:
	\begin{enumerate}[\upshape (i)]
		\item $D$ is induced by a projectional skeleton.
		\item $D$ is weak$^*$ countably closed and $(E,\sigma(E,D))$ is monotonically Sokolov.		
	\end{enumerate}
\end{theorem}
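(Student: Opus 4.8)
The plan is to reduce Theorem~\ref{t:answerKubis} to its topological counterpart Theorem~\ref{t:answerCuth} by exploiting the known duality between projectional skeletons on a Banach space $E$ and retractional skeletons on the dual unit ball, as developed in \cite{kubisSkeleton} and \cite{cuthSimul}. Concretely, given a norming subspace $D\subset E^*$, I would first observe that a projectional skeleton on $E$ inducing $D$ corresponds to a retractional skeleton in the compact space $K=(B_{E^*},w^*)$ whose adjoint projections have ranges weak$^*$-generated by the ranges of the projections; this is the content of the translation results in \cite{cuthSimul}. The induced set $D(\mathfrak s)\subset K$ then sits inside $D\cap B_{E^*}$, and (i) of Theorem~\ref{t:answerKubis} should be equivalent to saying that $D\cap B_{E^*}$ is induced by a retractional skeleton in $K$.

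Next I would connect the two ambient function spaces. The topology $\sigma(E,D)$ on $E$ is exactly the topology of pointwise convergence on $D$, so evaluating $E$ on $B_{E^*}$ embeds $(E,\sigma(E,D))$ as a subspace of $(\C(K),\tau_p(D\cap B_{E^*}))$; since $E$ is the uniform closure of its image and $\tau_p(D)$ sits between the pointwise and norm topologies, I expect $(E,\sigma(E,D))$ and $(\C(K),\tau_p(D\cap B_{E^*}))$ to carry the same monotone-Sokolov status. The strategy is therefore: apply Theorem~\ref{t:answerCuth} to $K$ and the dense set $D'=D\cap B_{E^*}$, obtaining the equivalence of ``$D'$ induced by a retractional skeleton in $K$'' with ``$D'$ countably compact and $(\C(K),\tau_p(D'))$ monotonically Sokolov'', and then transport each side across the two dictionaries just described.

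The two remaining matching tasks are the countable-compactness/closedness conditions and the preservation of monotone Sokolovness under passing from $\C(K)$ to the subspace $E$. For the first, I would show that $D$ is weak$^*$ countably closed in $E^*$ if and only if $D'=D\cap B_{E^*}$ is countably compact in $K$: the forward direction uses that a weak$^*$-convergent sequence in $D'$ has its limit in $D$ and in the closed ball, while the converse recovers countable closedness of $D$ from countable compactness of the balls by a scaling/homogeneity argument, since $D$ is a subspace. For the second, monotone Sokolovness is a property stable under closed subspaces and under taking a dense linear subspace that is closed in the relevant topology, which is precisely the relation between $E$ and $\C(K)$ here; I would invoke the permanence properties of monotone Sokolov spaces established in \cite{tkaRoj}.

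The main obstacle I anticipate is making the duality dictionary precise enough that the retractional skeleton in $K$ really induces the prescribed set $D'=D\cap B_{E^*}$ rather than merely some retractional skeleton, and conversely that monotone Sokolovness of $(\C(K),\tau_p(D'))$ faithfully reflects that of $(E,\sigma(E,D))$ despite the passage from the whole space $\C(K)$ to the uniformly closed subspace $E$. Both pinch points hinge on homogeneity: the linear structure of $E$ lets one pass freely between the unit ball and the whole space, so I would isolate a short lemma asserting that the scaled copies $nB_{E^*}$ glue the ball-level retractional skeleton into a skeleton inducing all of $D$, and symmetrically that the subspace topology from $\C(K)$ on $E$ agrees with $\sigma(E,D)$. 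Once these are in place, the theorem follows by feeding Theorem~\ref{t:answerCuth} through the dictionary.
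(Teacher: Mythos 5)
Your overall architecture (reduce to Theorem~\ref{t:answerCuth} via the projectional/retractional duality on the dual ball) is indeed the paper's strategy, but two of your steps have genuine gaps. First, you never renorm, and the dictionary you rely on is simply false at the level of generality you use it: $D$ is only assumed norming, so $D\cap B_{E^*}$ need \emph{not} be weak$^*$-dense in $K=(B_{E^*},w^*)$ (for an $r$-norming subspace with $r>1$ its intersection with the ball may only have closure containing $\frac1r B_{E^*}$), so Theorem~\ref{t:answerCuth} --- which requires a dense subset of $K$ --- does not apply to $D'=D\cap B_{E^*}$; moreover, the adjoints of the projections of a general projectional skeleton need not map $B_{E^*}$ into itself, so they do not restrict to retractions of the ball at all. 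The correspondence ``$D$ induced by a $1$-projectional skeleton $\iff$ $D\cap B_{E^*}$ induced by a retractional skeleton in $K$'' (Lemma~\ref{l:projectionalIffRetractional}) holds only for $1$-norming $D$. The paper handles this by first passing to an equivalent norm $|\cdot|$ making $D$ $1$-norming (\cite[Proposition 1]{kubisSkeleton}) and then proving the nontrivial transfer that $D$ is induced by a projectional skeleton in $\langle E,\|\cdot\|\rangle$ iff it is induced by a $1$-projectional skeleton in $\langle E,|\cdot|\rangle$, which needs \cite[Theorem 15 and Corollary 19]{kubisSkeleton}; your proposed ``glue the scaled copies $nB_{E^*}$'' lemma does not address either obstruction, since the problem is the norm of the projections and the density of $D'$, not unboundedness. (Relatedly, your scaling argument that countable compactness of $D\cap B_{E^*}$ recovers weak$^*$ countable closedness of $D$ is broken for unbounded countable sets: the weak$^*$ topology is not first countable there, and $\ov{C}^{w^*}=\bigcup_n\ov{C\cap nB_{E^*}}^{w^*}$ can fail; the correct argument uses the skeleton itself, as ranges of adjoint projections are weak$^*$-closed.)

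Second, and more seriously, the hard implication (ii)$\Rightarrow$(i) cannot be obtained by ``transporting across the dictionary'': to invoke Theorem~\ref{t:answerCuth} you would need $(\C(K),\tau_p(D'))$ to be monotonically Sokolov, but your hypothesis gives this only for the $\tau_p(D)$-\emph{closed subspace} $(E,\sigma(E,D))$, and monotone Sokolovness passes down to closed subspaces (Fact~\ref{f:mrs}(c)), never up to the ambient space --- your appeal to permanence properties of \cite{tkaRoj} goes in the wrong direction exactly where it matters. The paper's actual route for this direction is different: from $(E,\sigma(E,D))$ monotonically Sokolov it gets $\C_p(E,\sigma(E,D))$ monotonically retractable (Fact~\ref{f:mrs}(b)), embeds $(\sspan(D),w^*)$ into that space, uses weak$^*$ countable compactness of $D'$ together with Fact~\ref{f:mrs}(h) to conclude that $D'$ is closed there and hence monotonically retractable, applies Theorem~\ref{t:main} to obtain a \emph{full} retractional skeleton on $D'$, then uses Lemma~\ref{l:kalendaBetaA} (which needs Lindel\"ofness of $(E,\sigma(E,D))$, from Fact~\ref{f:mrs}(e)) to get $\beta D'=B_{E^*}$, and finally Proposition~\ref{p:rSkeletonInBetaX} and Lemma~\ref{l:projectionalIffRetractional} to land back in the Banach space. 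Without some substitute for this chain --- in particular for the step producing a skeleton from monotone retractability of $D'$ alone, and for the identification $\beta D'=B_{E^*}$ --- your proposal proves only the easy implication (i)$\Rightarrow$(ii).
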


This theorem will proved in the last section, using its more precise version, Theorem~\ref{t:skeletonIffXSokolov}. 

\section{Preliminaries}

In this section we collect basic notation, terminology and some known facts which will be used in the sequel.

We denote by $\omega$ the set of all natural numbers (including $0$), by $\en$ the set $\omega\setminus\{0\}$. If $X$ is a set then $\exp(X) = \{Y\setsep Y\subset X\}$. We denote by $[X]^{\leq\omega}$ all countable subsets of $X$.

All topological spaces are assumed to be Tychonoff. Let $T$ be a topological space.
\begin{itemize}
	\item  $\tau(T)$ denotes the topology of $T$ and $\tau(x,T) = \{U\in\tau(T)\setsep x\in U\}$ for any $x\in T$.
	\item  A subset $S\subset T$ is said to be countably closed if $\ov{C}\subset S$ for every countable subset $C\subset S$. It is easy to check that a countably closed subset of a countably compact space is countably compact.
  \item   A family $\N$ of subsets of $T$ is said to be a \emph{network} of $T$ if any open set in $T$ is the union of a subfamily of $\N$
	\item If $A\subset T$, then a family $\N$ of subsets of $T$ is said to be an \emph{external network} of $A$ in $T$ if for any $a\in A$ and $U\in\tau(a,T)$ there exists $N\in\N$ such that $a\in N\subset U$.
	\item If $Y$ is a topological space and $f:T\to Y$ is a continuous map, then a family $\N$ of subsets of $T$ is said to be a \emph{network of $f$} if for any $x\in T$ and $U\in\tau(f(x),Y)$ there exists $N\in\N$ such that $x\in N$ and $f[N]\subset U$.
	\item $\beta T$ denotes the \v{C}ech-Stone compactification of $T$.
\end{itemize}

For any topological spaces $X$ and $Y$ the set of continuous functions from $X$ to $Y$ is denoted by $\C(X,Y)$. We write $\C(X)$ instead of $\C(X,\er)$ and $\C_b(X)$ for the set of all bounded functions from $\C(X)$. By $C_p(X)$ we denote the space $\C(X)$ equipped with the the topology of pointwise convergence (i.e., the topology inherited from $\er^X$). Moreover, if $D\subset X$ is dense, we denote by $\tau_p(D)$ the topology of the pointwise convergence on $D$ (i.e. the weakest topology on $\C(X)$ such that $f\mapsto f(d)$ is continuous for every $d\in D$).

We shall consider Banach spaces over the field of real numbers. If $E$ is a Banach space and $A\subset E$, we denote by $\sspan{A}$ the linear hull of $A$. $B_E$ is the closed unit ball of $E$; i.e., the set $\{x\in E:\; \|x\| \leq 1\}$. $E^*$ stands for the (continuous) dual space of $E$. For a set $A\subset E^*$ we denote by $\ov{A}^{w^*}$ the weak$^*$ closure of $A$. Given a set $D\subset E^*$ we denote by $\sigma(E,D)$ the weakest topology on $E$ such that each functional from $D$ is continuous. A set $D\subset E^*$ is \textit{r-norming} if $\|x\| \leq r. \sup\{|x^*(x)|:\;x^*\in D\cap B_{E^*}\}$. We say that a set $D\subset E^*$ is norming if it is $r$-norming for some $r\geq 1$.

The following definitions come from \cite{tkaRoj}.

\begin{definition}Let $X, Y$ be sets, $\O\subset\exp(X)$ closed under countable increasing unions, $\N\subset\exp(Y)$ and $f:\O\to\N$. We say that $f$ is \emph{$\omega$-monotone} if
\begin{enumerate}[\upshape (i)]
	\item $f(A)$ is countable for every countable $A\in\O$;
	\item if $A\subset B$ and $A,B\in\O$ then $f(A)\subset f(B)$;
	\item if $\{A_n\setsep n\in\omega\}\subset \O$ and $A_n\subset A_{n+1}$ for every $n\in\omega$ then $f(\bigcup_{n\in\omega}A_n) = \bigcup_{n\in\omega}f(A_n)$.
\end{enumerate}
\end{definition}

\begin{definition}A space $T$ is \emph{monotonically Sokolov} if we can assign to any countable family $\F$ of closed subsets of $T$ a continuous retraction $r_\F:T\to T$ and a countable external network $\N(\F)$ for $r_\F(T)$ in $T$ such that $r_\F(F)\subset F$ for every $F\in\F$ and the assignment $\N$ is $\omega$-monotone.
\end{definition}

In the following statement we sum up some properties of monotonically retractable and monotonically Sokolov spaces which we will use later.
They follow from results of \cite{tkaRoj}.

\begin{fact}\label{f:mrs}
Let $X$ be a topological space. Then:
	\begin{enumerate}[\upshape (a)]
		\item $X$ is monotonically retractable if and only if $\C_p(X)$ is monotonically Sokolov.
		\item $X$ is monotonically Sokolov if and only if $\C_p(X)$ is monotonically retractable.
		\item Any closed subspace of a monotonically retractable (resp. monotonically Sokolov) space is monotonically retractable (resp. monotonically Sokolov).
		\item A countable product of monotonically Sokolov spaces is monotonically Sokolov.
		\item Any monotonically Sokolov space is Lindel\"of.
		\item Any monotonically retractable space is normal and $\omega$-monolithic (i.e., any separable subset has countable network).
		\item Any monotonically retractable space has countable tightness.
		\item Any countably compact subset of a monotonically retractable space is closed (and hence monotonically retractable).
	\end{enumerate}
\end{fact}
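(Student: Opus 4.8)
My plan is to verify the eight items separately, relying throughout on \cite{tkaRoj}. Items (a) and (b) are the two halves of the $\C_p$-duality between monotone retractability and the monotone Sokolov property and are the central theorems of \cite{tkaRoj}; I would quote them after checking that the definitions used here agree with theirs, in particular that ``$\omega$-monotone'' is read in the same way. Items (c), (d) and (e)---stability under closed subspaces, stability of the monotone Sokolov property under countable products, and the Lindel\"of property of monotonically Sokolov spaces---are also established in \cite{tkaRoj}. For the closed-subspace case of a monotonically Sokolov space one simply restricts the assigned retraction and external network to the subspace and checks that $\omega$-monotonicity survives; the retractable case and the product case are more delicate, and I would invoke \cite{tkaRoj} directly.

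For the remaining items I would give self-contained arguments. The heart of (f) is $\omega$-monolithicity: given a separable $S\subseteq X$ with countable dense subset $A$, I take the retraction $r_A$ and its countable network $\N(A)$. Since $r_A[X]$ is the fixed-point set of $r_A$ it is closed, so $S\subseteq\ov A\subseteq r_A[X]$; and one checks that $\{r_A[N]\setsep N\in\N(A)\}$ is a countable network of the subspace $r_A[X]$, whence $S$ has a countable network. The normality in (f) and the countable tightness (g) I would take from \cite{tkaRoj}. Item (h) is then a short deduction from (f), (g) and (c): if $Y\subseteq X$ is countably compact and $x\in\ov Y$, countable tightness gives a countable $B\subseteq Y$ with $x\in\ov B$, and the closure $C$ of $B$ inside $Y$ is separable and countably compact, so by (f) it has a countable network and is therefore compact and metrizable (as noted after the definition of a retractional skeleton). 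Being compact, $C$ is closed in the Tychonoff space $X$ and contains $B$, hence $\ov B\subseteq C\subseteq Y$ and in particular $x\in Y$. Thus $Y$ is closed and, by (c), monotonically retractable.

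The genuinely substantial input is external: the duality theorems (a), (b) and the product theorem (d) carry the real weight and I would not reprove them here. Among the steps carried out in place, the delicate ones are the bookkeeping showing that restriction to a closed subspace preserves $\omega$-monotonicity, and the deduction (h), where the only clean route I see is to pass through a separable piece and exploit that countable compactness together with a countable network forces compactness and metrizability. I therefore expect the main obstacle to be not any single hard estimate but rather the faithful matching of the present definitions with those of \cite{tkaRoj}, so that the quoted results apply without any gap.
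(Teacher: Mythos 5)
Your plan follows essentially the same route as the paper: items (a)--(e) and the normality in (f) are quoted from \cite{tkaRoj} there too, and your argument for (h) is the paper's almost verbatim (take $a\in\ov{A}$, use countable tightness to get a countable $S\subset A$ with $a\in\ov S$, observe that $\ov S\cap A$ is separable, countably compact and has a countable network by (f), hence is Lindel\"of and therefore compact, hence closed in $X$, so $a\in A$; your extra remark about metrizability is harmless but unnecessary). Your self-contained proof of $\omega$-monolithicity is also correct --- since every point of $r_A[X]$ is fixed by $r_A$, the family $\{r_A[N]\setsep N\in\N(A)\}$ is indeed a countable network of the subspace $r_A[X]\supseteq\ov A\supseteq S$ --- although the paper simply cites \cite{tkaRoj} for all of (f).

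The one genuine divergence is (g). You propose to ``take the countable tightness from \cite{tkaRoj},'' but the paper does not find it there: it \emph{derives} it, and this derivation is the only nontrivial content of the proof. Concretely: if $X$ is monotonically retractable then $Y=\C_p(X)$ is monotonically Sokolov by (a), every finite power $Y^n$ is monotonically Sokolov by (d), hence Lindel\"of by (e); by Arkhangel\cprime ski\u\i's theorem \cite[Theorem II.1.1]{archangelskii} (countable tightness of $\C_p(Y)$ is equivalent to all finite powers of $Y$ being Lindel\"of) the space $\C_p(\C_p(X))$ has countable tightness, and $X$ embeds in $\C_p(\C_p(X))$, so $X$ has countable tightness. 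Since (h) --- which you prove correctly --- leans on (g), your write-up has a citation-shaped hole exactly at this point; you should replace the bare reference by the four-step chain above, all of whose ingredients you already have in hand. Note also that this makes the paper's (d) and (e) load-bearing for (g) and (h), not merely items to be catalogued.
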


\begin{proof} 
The assertions (a) and (b) are proved in \cite[Theorem 3.5]{tkaRoj}, the assertions (c)--(f) follow from \cite[Theorems 3.4 and 3.6]{tkaRoj}.

Let us show (g). Let $X$ be a monotonically retractable space. Consequently, $Y = \C_p(X)$ is monotonically Sokolov by (a) and by (d) $Y^n$ is monotonically Sokolov for every $n\in\en$. It follows by (e) that $Y^n$ is Lindel\"of for every $n\in\en$ and, by \cite[Theorem II.1.1]{archangelskii}, $\C_p(Y) = \C_p(\C_p(X))$ has a countable tightness. Since $X$ embeds in $\C_p(\C_p(X))$, it must have a countable tightness.

Finally, let us prove (h). Let $X$ be a monotonically retractable space and $A\subset X$ countably compact. Fix $a\in \ov{A}$. Since $X$ has a countable tightness (by (g)), there is a countable set $S\subset A$ with $a\in \ov{S}$. By (f) $\ov{S}\cap A$ has a countable network, hence it is Lindel\"of. Since $A$ is countably compact, $\ov{S}\cap A$ is countably compact and Lindel\"of; hence, compact. It follows that $\ov{S}\cap A$ is closed in $X$ and $a\in A$.
\end{proof}

In the following we summarize some easy facts concerning sets induced by a retractional skeleton.

\begin{fact}\label{f:basics}Let $X$ be a countably compact space and let $D$ be a set induced by 
a retractional skeleton in $X$. Then:
\begin{enumerate}[\upshape (i)]
	\item $D$ is countably closed in $X$.
	\item $D$ is sequentially compact.
	\item If $X$ is compact, then $X=\beta D$.
\end{enumerate}
\end{fact}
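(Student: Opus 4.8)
The three assertions split naturally, and the first two rest on a single idea: absorb any countable subset of $D$ into one range $r_t[X]$. Given a countable set $C=\{x_n:n\in\omega\}\subseteq D$, I would choose $s_n\in\Gamma$ with $x_n\in r_{s_n}[X]$ and, using up-directedness of $\Gamma$, build an increasing sequence $t_0\le t_1\le\cdots$ with $t_n\ge s_0,\dots,s_n$. Either this sequence stabilizes, or, passing to a strictly increasing subsequence and invoking property (iii) of the skeleton, we obtain $t=\sup_n t_n\in\Gamma$; in both cases $t\ge s_n$ for every $n$. The key computation is that $t\ge s_n$ forces $r_t(x_n)=x_n$: since $x_n\in r_{s_n}[X]$ we have $r_{s_n}(x_n)=x_n$, and property (ii) gives $r_{s_n}=r_t\circ r_{s_n}$, so $x_n=r_{s_n}(x_n)=r_t(r_{s_n}(x_n))=r_t(x_n)$. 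Hence $C\subseteq r_t[X]$. As $r_t[X]$ is compact (property (i)) it is closed in $X$, so $\overline{C}\subseteq r_t[X]\subseteq D$, proving (i); and since $r_t[X]$ is moreover metrizable, any sequence in $C$ has a subsequence converging inside $r_t[X]\subseteq D$, proving (ii).

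For (iii), first note that $D$ is dense: property (iv) says $x=\lim_{s}r_s(x)$ with $r_s(x)\in D$, so every point of $X$ lies in $\overline D$. Since $X$ is then a compactification of $D$, the identity $X=\beta D$ is equivalent to $D$ being $C^*$-embedded in $X$, that is, to every bounded $f\in\C(D)$ extending continuously to $X$. The plan is to manufacture such an extension from the skeleton itself, setting $F(x)=\lim_{s\in\Gamma}f(r_s(x))$.

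The existence of this limit is the routine part: were the net $(f(r_s(x)))_s$ to oscillate, I would use up-directedness to build a strictly increasing chain $s_0<s_1<\cdots$ along which $f(r_{s_n}(x))$ separates two values by a fixed gap; putting $t=\sup_n s_n$ (property (iii)) and $y=r_t(x)\in D$, properties (ii) and (iii) give $r_{s_n}(x)=r_{s_n}(y)\to r_t(y)=y$, so $f(r_{s_n}(x))\to f(y)$ by continuity of $f$ on $D$, contradicting the oscillation. Thus $F$ is well defined, and $F|_D=f$ because each $d\in D$ is fixed by $r_s$ for all large $s$. The genuine obstacle is the continuity of $F$: a pointwise limit of the continuous functions $f\circ r_s$ need not be continuous, and $X$ itself need not have countable tightness (for instance $X=[0,1]^{\omega_1}$ carries a retractional skeleton), so no sequential shortcut for $X$ is available. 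I expect to overcome this by proving the stronger \emph{stabilization} property that every bounded $f\in\C(D)$ factors through a single retraction, i.e. $f=f\circ r_s$ on $D$ for some $s\in\Gamma$; granting this, $F=(f|_{r_s[X]})\circ r_s$ is a composition of continuous maps, hence continuous, and the proof is complete. Establishing this coordinate-dependence of continuous functions on $D$ — the analogue for abstract skeletons of the classical fact that a continuous function on a $\Sigma$-product depends on countably many coordinates — is the heart of the matter.
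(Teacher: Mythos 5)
Your proofs of (i) and (ii) are correct and take exactly the paper's route, just with the details written out: the paper's proof is the one-line observation that any countable $A\subset D$ is absorbed into a single range $r_s[X]$, which is metrizable compact, and your absorption argument (the dichotomy stabilize/strictly increasing subsequence with supremum supplied by axiom (iii), and the computation $r_t(x_n)=r_t(r_{s_n}(x_n))=r_{s_n}(x_n)=x_n$ from axiom (ii)) is precisely what the paper leaves implicit there.

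For (iii) your proposal has a genuine gap, which you yourself flag: everything is reduced to the ``stabilization'' lemma that every $f\in\C_b(D)$ satisfies $f=f\circ r_s$ on $D$ for some single $s\in\Gamma$, but that lemma is only announced, not proved --- and it is precisely the substance of the result here, so as written (iii) is not established. (The paper does not prove (iii) either; it cites \cite[Theorem 32]{kubisSkeleton}, and your plan is essentially how that theorem is proved, so the reduction itself is sound.) The missing lemma is true and can be closed as follows. First show that for every $\varepsilon>0$ there is $s$ with $\sup_{x\in D}|f(x)-f(r_s(x))|\le\varepsilon$: otherwise choose points $x_n\in D$ and a strictly increasing sequence $(s_n)$ in $\Gamma$ with $|f(x_n)-f(r_{s_n}(x_n))|>\varepsilon$ and $x_n\in r_{s_{n+1}}[X]$; putting $t=\sup_n s_n$, all the points $x_n$ and $r_{s_n}(x_n)$ lie in the metrizable compact $r_t[X]\subset D$, so after passing to subsequences $x_{n_k}\to x$ and $r_{s_{n_k}}(x_{n_k})\to y$ in $r_t[X]$; for each fixed $m$ and all $n_k\ge m$ we have $r_{s_m}(r_{s_{n_k}}(x_{n_k}))=r_{s_m}(x_{n_k})$, so letting $k\to\infty$ gives $r_{s_m}(y)=r_{s_m}(x)$ for every $m$, hence $y=r_t(y)=r_t(x)=x$ by axiom (iii), contradicting $|f(x)-f(y)|\ge\varepsilon$ (continuity of $f$ on $D$). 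Then pick an increasing sequence $(s_n)$ with $\sup_D|f-f\circ r_{s_n}|\le 1/n$ and set $s=\sup_n s_n$: for $x\in D$, $r_{s_n}(x)\to r_s(x)$ within $D$, so $f(x)=\lim_n f(r_{s_n}(x))=f(r_s(x))$. Note that once stabilization is available, your preliminary net-limit construction $F(x)=\lim_{s\in\Gamma}f(r_s(x))$ (whose existence argument is fine but unnecessary) can be discarded: $f\circ r_s$ itself is the required continuous extension, and $X=\beta D$ follows since $D$ is dense by axiom (iv).
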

\begin{proof}Let $\mathfrak{s} = \{r_s\}_{s\in\Gamma}$ be a retractional skeleton in $X$ with $D = D(\mathfrak{s})$. Whenever $A\subset D$ is countable, there is $s\in\Gamma$ with $A\subset r_s[X]$; hence, $\ov{A}\subset r_s[X]$ is metrizable compact and (i) and (ii) follows. 
The assertion (iii) is proved in \cite[Theorem 32]{kubisSkeleton}.
\end{proof}

\section{The method of elementary models}

The purpose of this section is to briefly recall the reader of some basic facts concerning the method of elementary models. This is a set-theoretical method which can be used in various branches of mathematics. A. Dow in \cite{dow} illustrated the use of this method in topology, P. Koszmider in \cite{kos05} used it in functional analysis. Later, inspired by \cite{kos05}, W. Kubi\'s in \cite{kubisSkeleton} used it to construct retractional (resp. projectional) skeleton in certain compact (resp. Banach) spaces. In \cite{cuth} the method has been slightly simplified and specified. We briefly recall some basic facts. More details may be found e.g. in \cite{cuth} and \cite{cuthKalenda}.

First, let us recall some definitions. Let $N$ be a fixed set and $\phi$ a formula in the language of $ZFC$. Then the {\em relativization of $\phi$ to $N$} is the formula $\phi^N$ which is obtained from $\phi$ by replacing each quantifier of the form ``$\forall x$'' by ``$\forall x\in N$'' and each quantifier of the form ``$\exists x$'' by ``$\exists x\in N$''.

If $\phi(x_1,\ldots,x_n)$ is a formula with all free variables shown (i.e., a formula whose free variables are exactly $x_1,\ldots,x_n$) then  $\phi$ is said to be {\em absolute for $N$} if
\[
\forall a_1,\ldots,a_n\in N\quad (\phi^N(a_1,\ldots,a_n) \leftrightarrow \phi(a_1,\ldots,a_n)).
\]

A list of formulas, $\phi_1,\ldots,\phi_n$, is said to be {\em subformula closed} if  every subformula of a formula in the list is also contained in the list.

The method is based mainly on the following theorem (a proof can be found in \cite[Chapter IV, Theorem 7.8]{Kunen}).

\begin{theorem}\label{T:countable-model}
Let $\phi_1, \ldots, \phi_n$ be any formulas and $Y$ any set. Then there exists a set $M \supset Y$ such that
$\phi_1, \ldots, \phi_n \text{ are absolute for } M$ and $|M| \leq \max(\omega,|Y|)$.
\end{theorem}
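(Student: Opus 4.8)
The plan is to prove this by the standard downward Löwenheim--Skolem argument, organized around the Tarski--Vaught criterion: one closes the given set $Y$ under suitably chosen witness (Skolem) functions for the existential quantifiers occurring in the formulas. Since absoluteness is established by induction on the structure of a formula, and this induction needs the inductive hypothesis available for every subformula, the first reduction is to enlarge the finite list $\phi_1,\ldots,\phi_n$ to its subformula closure. This closure is still a finite list, and absoluteness for the larger list trivially implies absoluteness for the original one, so from now on I may assume the list is subformula closed and that each formula is built from atomic formulas using only $\neg$, $\wedge$ and $\exists$ (treating $\vee$, $\forall$, $\rightarrow$ as abbreviations).

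Next I would introduce the witness functions. For each formula in the list of the form $\exists x\,\psi(x,y_1,\ldots,y_k)$ I use the axiom of choice to fix a function $H$ of $k$ arguments such that, for all parameters $a_1,\ldots,a_k$, if $\exists x\,\psi(x,a_1,\ldots,a_k)$ holds then $\psi(H(a_1,\ldots,a_k),a_1,\ldots,a_k)$ holds, with $H$ taking some fixed default value, say $\emptyset$, otherwise. There are only finitely many such functions, say $H_1,\ldots,H_m$, of respective finite arities $k_1,\ldots,k_m$.

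Now I would construct $M$ as the closure of $Y$ under these functions. Set $M_0=Y\cup\{\emptyset\}$, recursively $M_{j+1}=M_j\cup\bigcup_{i=1}^m H_i[M_j^{k_i}]$, and finally $M=\bigcup_{j\in\omega}M_j$; then $M\supset Y$. The cardinality estimate is the bookkeeping heart of the argument: each stage satisfies $|M_{j+1}|\le\max(\omega,|M_j|)$, since we adjoin a finite union of images of finite powers, so by induction $|M_j|\le\max(\omega,|Y|)$ for all $j$, and hence $|M|\le\omega\cdot\max(\omega,|Y|)=\max(\omega,|Y|)$. By construction $M$ is closed under every $H_i$, which is exactly the Tarski--Vaught condition: whenever $\exists x\,\psi(x,\bar a)$ holds with parameters $\bar a\in M$, a witness already lies in $M$.

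Finally I would verify absoluteness by induction on the complexity of the formulas in the (subformula closed) list. Atomic formulas are absolute because relativization does not alter the interpretation of $\in$ and $=$; the cases $\neg$ and $\wedge$ are immediate from the inductive hypothesis applied to the subformulas, which themselves belong to the list. For a formula $\exists x\,\psi$, one implication is trivial and the other uses precisely the witness property of $M$ together with the inductive hypothesis for $\psi$. I expect the main obstacle to be exactly the interplay between cardinality control and the closure construction --- keeping the chain of length $\omega$ so that $M$ stays small while still guaranteeing closure under all witness functions --- together with the organizational point that makes the induction go through, namely the passage to the subformula closure, which for this reason is indispensable.
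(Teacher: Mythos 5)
Your overall architecture (subformula closure, closure under witness functions in $\omega$ stages, the cardinality bookkeeping, and the Tarski--Vaught induction) is the right skeleton, but there is a genuine gap at the single most delicate point: the witness functions $H_1,\ldots,H_m$ do not exist in ZFC as you describe them. You say you ``use the axiom of choice to fix a function $H$ of $k$ arguments such that, for all parameters $a_1,\ldots,a_k$, \dots''. The domain of such an $H$ is the proper class $V^k$ of all $k$-tuples of sets, so $H$ cannot be a set function at all, and AC (a statement about sets of nonempty sets) does not produce it; choosing witnesses uniformly over \emph{all} set parameters amounts to a global choice function, equivalently a definable well-ordering of $V$, which is not a theorem of ZFC and cannot even be ``fixed'' as an object in its language. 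Since everything downstream (the sets $H_i[M_j^{k_i}]$, the closure $M$, the Tarski--Vaught verification) quantifies over these nonexistent objects, the proof as written does not go through. Note that the paper itself does not prove this theorem; it cites Kunen, Chapter IV, Theorem 7.8, whose proof is organized precisely so as to avoid this trap.

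The standard repair, and the route Kunen takes, is a two-step argument: first apply the Reflection Theorem --- proved from Replacement via Scott's trick, i.e.\ for each listed formula $\exists x\,\psi(x,\bar y)$ one maps a tuple $\bar a$ to the least ordinal $\beta$ such that a witness exists in $V_\beta$, and closes ordinals under the resulting suprema --- to obtain a single \emph{set} $V_\alpha\supset Y$ such that $\phi_1,\ldots,\phi_n$ are absolute between $V_\alpha$ and $V$. Then your Skolem-hull construction is carried out inside $V_\alpha$, where AC legitimately applies: well-order $V_\alpha$ and let each Skolem function pick the least witness, after which your stages $M_j$, the bound $|M|\le\max(\omega,|Y|)$, and the Tarski--Vaught induction work verbatim (giving absoluteness between $M$ and $V_\alpha$, hence between $M$ and $V$). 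Alternatively, your one-step construction can be salvaged without a prior reflection: at stage $j$ the relevant tuples form a set $A\subset M_j^{k}$ (by Separation), Replacement bounds the least witness ranks by some ordinal $\gamma$, so the nonempty sets of minimal-rank witnesses all lie in the set $V_\gamma$, and an ordinary set-indexed application of AC selects one witness per tuple. Either way, an appeal to Replacement/rank to convert the unbounded quantifier $\exists x$ into one bounded by a set is indispensable, and it is exactly the idea missing from your write-up.
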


Since the set from Theorem~\ref{T:countable-model} will often be used, the following notation is useful.

\begin{definition}
Let $\phi_1, \ldots, \phi_n$ be any formulas and $Y$ be any countable set.
Let $M \supset X$ be a countable set such that $\phi_1, \ldots, \phi_n$ are absolute for $M$.
Then we say that $M$ is an \emph{elementary model for $\phi_1,\ldots,\phi_n$ containing $X$}.
This is denoted by $M \prec (\phi_1,\ldots,\phi_n; Y)$.
\end{definition}

The fact that certain formula is absolute for $M$ will always be used in order to satisfy the assumption of the following lemma from \cite[Lemma 2.3]{cuthRmoutilZeleny}. Using this lemma we can force the model $M$ to contain all the needed objects created (uniquely) from elements of $M$.

\begin{lemma}\label{l:unique-M}
Let $\phi(y,x_1,\ldots,x_n)$ be a formula with all free variables shown and $Y$ be a countable set.
Let $M$ be a fixed set, $M \prec (\phi, \exists y \colon \phi(y,x_1,\ldots,x_n);\; Y)$, and
$a_1,\ldots,a_n \in M$ be such that there exists a set $u$ satisfying
$\phi(u,a_1,\ldots,a_n)$. Then there exists $u \in M$ such that $\phi(u,a_1,\ldots,a_n)$.
\end{lemma}

\begin{proof}Let us give here the proof just for the sake of completeness. Using the absoluteness of the formula $\exists u\colon \phi(u,x_1,\ldots,x_n)$ there exists $u\in M$ satisfying $\phi^M(u,a_1,\ldots,a_n)$.
Using the absoluteness of $\phi$ we get, that for this $u\in M$ the formula $\phi(u,a_1,\ldots,a_n)$ holds.
\end{proof}

It would be very laborious and pointless to use only the basic language of the set theory.
For example, having a function $f$, we often write $y = f(x)$ and we know that this is a shortcut for a formula with free variables $x$, $y$, and $f$.

Indeed, consider the formula
\[
\varphi(x,y,z) = \forall a (a\in z \leftrightarrow (a=x\vee a=y)).
\]
Then $\varphi(x,y,z)$ is true if and only if $z = \{x,y\}$. Recall that $y = f(x)$ means $\{\{x\},\{x,y\}\}\in f$. Hence, $y = f(x)$ if and only if the following formula is true
\[
\forall z (\forall a (a\in z \leftrightarrow \varphi(x,x,a)\vee \varphi(x,y,a)) \Rightarrow z\in f).
\]

Therefore, in the following text we use this extended language of the set theory as we are used to.
We shall also use the following convention.

\begin{convention}
Whenever we say ``\emph{for any suitable model $M$ (the following holds \dots)}''
we mean that  ``\emph{there exists a list of formulas $\phi_1,\ldots,\phi_n$ and a countable set $Y$ such that for every $M \prec (\phi_1,\ldots,\phi_n;Y)$ (the following holds \dots)}''.
\end{convention}

By using this new terminology we lose the information about the formulas $\phi_1,\ldots,\phi_n$ and the set $Y$.
However, this is not important in applications.

Let us recall several further results about elementary models
(all the proofs are based on Lemma \ref{l:unique-M} and they can be found in \cite[Proposition 2.9, 2.10 and 3.2]{cuth} and \cite[Lemma 4.8]{cuthCMUC}).

\begin{lemma}\label{l:predp}
There are formulas $\theta_1,\dots,\theta_m$ and a countable set $Y_0$ such that any $M\prec(\theta_1,\ldots,\theta_n;\; Y_0)$ satisfies the following conditions:
\begin{itemize}
	\item If $f\in M$ is a mapping, then $\dom(f)\in M$, $\rng(f)\in M$ and $f[M]\subset M$.
	\item If $A$ is finite, then $A\in M$ if and only if $A\subset M$.
	\item If $A\in M$ is a countable set, then $A\subset M$.
	\item If $A,B\in M$, then $A\cup B\in M$.
\end{itemize}
\end{lemma}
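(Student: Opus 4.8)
The plan is to reduce every one of the four closure properties to a single application of Lemma~\ref{l:unique-M}. The common template is this: for the object $u$ we wish to locate inside $M$ --- be it $\dom(f)$, $\rng(f)$, a value $f(x)$, a union, a singleton, or a surjection --- I would write down a formula $\phi(u,a_1,\ldots,a_k)$ in the language of set theory that is satisfied by $u$ when $a_1,\ldots,a_k$ are the relevant parameters, check that such a $u$ exists once the parameters are taken from $M$, and then conclude $u\in M$ from the lemma. Accordingly, the list $\theta_1,\ldots,\theta_m$ will be the subformula closure of the finite collection of formulas described below together with their existential closures of the form $\exists u\colon\phi$, which is precisely what Lemma~\ref{l:unique-M} requires. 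The set $Y_0$ I would take to contain $\omega$ together with all of its elements (so that $\omega\in M$ and, crucially, $\omega\subset M$) as well as the empty set.

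For the first and last items the formulas are immediate. The domain and range of $f$ are the unique sets $u$ with $u=\dom(f)$, resp. $u=\rng(f)$; feeding $f\in M$ as a parameter into Lemma~\ref{l:unique-M} gives $\dom(f),\rng(f)\in M$. For $f[M]\subset M$ I would argue pointwise: for each $x\in M\cap\dom(f)$ the value $f(x)$ is the unique $y$ satisfying the formula ``$y=f(x)$'' (the pairing-based formula recalled just above the Convention), so with parameters $f,x\in M$ the lemma yields $f(x)\in M$. The union $A\cup B$ is handled by the formula $\forall z\,(z\in u\leftrightarrow(z\in A\vee z\in B))$ with parameters $A,B\in M$, again via the lemma.

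The countable case is the crux, and I would treat it before the finite case so as to obtain the latter as a consequence. Given a countable $A\in M$: if $A=\emptyset$ then $A\subset M$ trivially; otherwise there exists a surjection $f\colon\omega\to A$. Applying Lemma~\ref{l:unique-M} to the formula ``$f$ is a surjection of $\omega$ onto $A$'' with parameters $\omega,A\in M$ (note that the lemma does \emph{not} demand that the witness be unique, only that it exist) produces such an $f$ lying in $M$. Since $f\in M$ is a mapping, the already-established first item gives $f[M]\subset M$; and because $\omega\subset M$ we get $A=f[\omega]\subset f[M]\subset M$. This is where putting all of $\omega$ into $Y_0$ does the real work. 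The finite item then splits into its two implications: $A\in M\Rightarrow A\subset M$ is immediate, as a finite set is countable; conversely, if $A\subset M$ is finite I would prove $A\in M$ by induction on $|A|$, using that $\emptyset\in M$ (since $\emptyset\in Y_0$), that each singleton $\{a\}$ with $a\in M$ lies in $M$ (the formula $u=\{a\}$), and the union item to adjoin one element at a time.

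The main obstacle I anticipate is organizational rather than deep. One must fix the formulas and the set $Y_0$ once and for all at the outset, yet the verifications have a definite bootstrapping order: the range/value item feeds the countable item, which feeds the finite item, whose converse in turn rests on the union item. One therefore has to ensure that every formula invoked --- including each existential closure and every subformula --- already sits in the single list $\theta_1,\ldots,\theta_m$, and that $\omega$ together with its elements belongs to $Y_0$, before any of the four verifications is carried out. None of the individual set-theoretic formulas is hard to write; the care lies in declaring them all simultaneously.
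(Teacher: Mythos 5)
Your proposal is correct and follows exactly the route the paper indicates: the paper does not reprove this lemma but cites \cite[Propositions 2.9, 2.10 and 3.2]{cuth} and \cite[Lemma 4.8]{cuthCMUC}, noting explicitly that ``all the proofs are based on Lemma~\ref{l:unique-M}'', which is precisely your reduction. Your choices --- fixing in advance the subformula-closed list including the existential closures, placing $\omega\cup\{\omega\}$ and $\emptyset$ into $Y_0$, deriving the countable case from a surjection $f\colon\omega\to A$ obtained via Lemma~\ref{l:unique-M} (existence, not uniqueness, sufficing), and bootstrapping the finite case from singletons and unions --- match the standard arguments in those references.
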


Moreover, we will need to find suitable models in a ``monotonic way''. Thus, the following lemma from \cite[Lemma 4]{cuthKalenda} will be useful as well.

\begin{lemma}\label{l:BasicSkolem}
 Let $\phi_1,\ldots,\phi_n$ be a subformula closed list of formulas and let $R$ be a set such that $\phi_1,\ldots,\phi_n$ are absolute for $R$. Then there exists a function $\psi:[R]^{\leq\omega}\to [R]^{\leq\omega}$ such that
 \begin{enumerate}[\upshape (i)]
	\item For every $A\in[R]^{\leq\omega}$, $\psi(A)\prec (\phi_1,\ldots,\phi_n; A)$.
	\item The mapping $\psi$ is $\omega$-monotone.
\end{enumerate}
\end{lemma}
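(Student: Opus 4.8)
The plan is to realize $\psi(A)$ as a Skolem hull of $A$ inside $R$, built from finitely many Skolem functions of finite arity, and then to exploit that finiteness of arity to obtain $\omega$-monotonicity.

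First I would reduce the requirement ``$\phi_1,\ldots,\phi_n$ are absolute for $M$'' to a purely combinatorial closure condition via the Tarski--Vaught criterion. Since the list is subformula closed, a countable set $M$ with $A\subset M\subset R$ satisfies $M\prec(\phi_1,\ldots,\phi_n;A)$ as soon as the following holds: for every subformula of the shape $\exists y\,\chi(y,x_1,\ldots,x_k)$ occurring in the list and all $a_1,\ldots,a_k\in M$, the existence of some $u\in R$ with $\chi(u,a_1,\ldots,a_k)$ already forces the existence of such a $u$ in $M$. Here absoluteness of the (subformulas of the) list for $R$ guarantees that a witness found in $R$ is a genuine witness, so this is exactly the condition needed to run the inductive Tarski--Vaught argument.

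Next I would fix the Skolem functions. For each existential subformula $\exists y\,\chi(y,x_1,\ldots,x_k)$ of the list, the axiom of choice provides a map $f_\chi\colon R^k\to R$ such that $f_\chi(a_1,\ldots,a_k)$ is a witness for $\chi(\cdot,a_1,\ldots,a_k)$ whenever one exists in $R$, and an arbitrary fixed element of $R$ otherwise. There are only finitely many such $f_\chi$, each of finite arity. I then define $\psi$ as the induced finitary closure operator: put $A_0=A$, let $A_{m+1}$ consist of $A_m$ together with all values $f_\chi(\bar a)$ as $\chi$ ranges over the chosen subformulas and $\bar a\in A_m^{\,k}$ ranges over tuples of the matching length, and set $\psi(A)=\bigcup_{m\in\omega}A_m$. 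Because $A$ is countable and each step adds only countably many new elements (a countable set has countably many finite tuples, and each $f_\chi$ is single valued), $\psi(A)$ is countable; it contains $A$ and is closed under every $f_\chi$, hence $\psi(A)\prec(\phi_1,\ldots,\phi_n;A)$ by the criterion above, giving~(i).

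Finally, for~(ii) I would verify the two clauses of $\omega$-monotonicity from standard properties of finitary closure operators. Monotonicity under inclusion is immediate by induction on $m$: if $A\subset B$ then $A_m\subset B_m$ for every $m$. The continuity clause under increasing unions is where the argument must be done carefully, and this is the main point: given an increasing sequence $(B^{(j)})$ with union $B$, every finite tuple from $B$ already lies in some $B^{(j)}$, so each application of a Skolem function carried out in the hull of $B$ depends on finitely many previously constructed elements and therefore already occurs in the hull of a single $B^{(j)}$; an induction on the stage $m$ then yields $\psi(B)=\bigcup_j\psi(B^{(j)})$. The one genuine obstacle is exactly this continuity step, and what makes it work is precisely that each formula has only finitely many free variables, so all the Skolem functions are finitary; everything else — countability, inclusion, and the reduction to the Tarski--Vaught closure condition — is routine.
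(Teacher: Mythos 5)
Your proposal is correct and follows essentially the same route as the paper's source for this lemma: the paper gives no proof of its own but quotes it from \cite[Lemma 4]{cuthKalenda}, where $\psi(A)$ is likewise built as the closure of $A$ under finitely many finitary Skolem functions (one for each existential subformula, justified by the Tarski--Vaught criterion together with absoluteness for $R$), and $\omega$-monotonicity is derived exactly as you do, from the finitary character of this closure operator. The only point to keep in mind is the standard convention (as in Kunen) that $\forall$ is an abbreviation for $\neg\exists\neg$, so treating existential subformulas indeed suffices for the induction.
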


\begin{definition}We say that the function from Lemma \ref{l:BasicSkolem} is a \emph{Skolem function for $\phi_1,\ldots,\phi_n$ and $R$}.
\end{definition}

\section{Proof of the main result}

In this section we are going to prove our main result, Theorem~\ref{t:charactFullSkeleton}. This is the content of the equivalence (i)$\Leftrightarrow$(iii) from the following theorem. We add one more equivalent condition, formulated with the use of elementary models, because via this
condition the proof will be done.

\begin{theorem}\label{t:main}Let $X$ be a countably compact space. Then the following are equivalent:
	\begin{itemize}
	\item[(i)] $X$ has a full retractional skeleton.
	\item[(ii)] For any suitable model $M$, $\C(X)\cap M$ separates the points of $\ov{X\cap M}$.
	\item[(iii)] $X$ is monotonically retractable.
\end{itemize}
\end{theorem}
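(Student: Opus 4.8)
**The plan is to prove the theorem via the cycle (i) $\Rightarrow$ (ii) $\Rightarrow$ (iii) $\Rightarrow$ (i), using elementary models as the connecting thread.** The condition (ii) is the natural intermediary because it translates the existence of a retractional skeleton into a separation property that elementary models can verify pointwise, and because monotone retractability is precisely about assigning retractions to countable sets in an $\omega$-monotone fashion — exactly the behavior that Skolem functions (Lemma~\ref{l:BasicSkolem}) provide.

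For (i) $\Rightarrow$ (ii), suppose $X$ carries a full retractional skeleton $\mathfrak{s}=\{r_s\}_{s\in\Gamma}$. I would fix a suitable model $M$ that ``sees'' the skeleton, i.e. with $\mathfrak{s}\in M$ and $X,\Gamma\in M$, and show that $\ov{X\cap M}$ is separated by $\C(X)\cap M$. The key point is that for a countable $M$ the set $\{s\in\Gamma\cap M\}$ should be directed (using that $M$ is closed under the pairwise-sup operation via Lemma~\ref{l:unique-M}, since $\Gamma$ is up-directed), so by property (iii) of the skeleton it has a supremum $t$, and $r_t$ restricted appropriately together with the countably-many coordinate functions available in $M$ should separate points. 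Concretely, if $x,y\in\ov{X\cap M}$ are distinct, I would use property (iv) of the skeleton and countable tightness to find some $r_s$ with $r_s(x)\neq r_s(y)$; since $r_s[X]$ is metrizable compact it embeds into $\er^\omega$ via continuous functions, and absoluteness lets me pull the relevant coordinate functions into $M$.

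For (ii) $\Rightarrow$ (iii), I would build the monotone retraction assignment directly from a Skolem function. Given a countable $A\subset X$, apply the Skolem function $\psi$ to obtain a model $M_A=\psi(A)$; then condition (ii) guarantees $\C(X)\cap M_A$ separates $\ov{X\cap M_A}$, so the natural candidate retraction is $r_A:X\to\ov{X\cap M_A}$ defined by sending $x$ to the unique point of $\ov{X\cap M_A}$ agreeing with $x$ on all of $\C(X)\cap M_A$. One must check this is well-defined (existence via countable compactness of $\ov{X\cap M_A}$, which is countably closed hence countably compact, and uniqueness via the separation in (ii)), continuous, and idempotent with the correct range. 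The network $\N(A)$ would be assembled from the countably many functions in $\C(X)\cap M_A$ — preimages of a countable base of $\er$ — and its $\omega$-monotonicity in $A$ is inherited from the $\omega$-monotonicity of $\psi$ (Lemma~\ref{l:BasicSkolem}(ii)).

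For (iii) $\Rightarrow$ (i), I would assemble the skeleton from the monotone retractions $r_A$, indexing by a cofinal directed family of countable sets $A$ closed under the assignment, and verify the four skeleton axioms; axiom (i) follows because $r_A[X]$ has the countable network $\N(A)$, and the limit axioms (iii)--(iv) follow from the $\omega$-monotone coherence of the $\N(A)$. \textbf{The main obstacle I anticipate is the well-definedness and single-valuedness of $r_A$ in the step (ii) $\Rightarrow$ (iii):} proving that the point of $\ov{X\cap M_A}$ determined by agreement on $\C(X)\cap M_A$ exists, is unique, and depends continuously on $x$ requires carefully combining the separation property (ii) with countable compactness of $X$ and the countable-tightness/monolithicity consequences of Fact~\ref{f:mrs}, and this is where the countably-compact (rather than compact) setting demands the most care, since one cannot simply pass to $\beta$-compactifications without first controlling the closures $\ov{X\cap M}$.
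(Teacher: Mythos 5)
Your cycle breaks at (iii)$\Rightarrow$(i), and this is a genuine gap, not a technicality. Monotone retractability supplies, for each countable $A\subset X$, \emph{some} retraction $r_A$ with $A\subset r_A[X]$ and a network $\N(A)$; only the assignment $A\mapsto\N(A)$ is required to be $\omega$-monotone, and nothing whatsoever ties $r_A$ to $r_B$ when $A\subset B$. But axiom (ii) of a retractional skeleton demands $r_s=r_s\circ r_t=r_t\circ r_s$ for $s\le t$, and your plan (``verify the four skeleton axioms; \dots the limit axioms follow from the $\omega$-monotone coherence of the $\N(A)$'') never addresses this compatibility, which simply does not follow: the given $r_A$ may be pairwise incompatible, and likewise $r_{\bigcup_n A_n}(x)=\lim_n r_{A_n}(x)$ does not follow from $\N(\bigcup_n A_n)=\bigcup_n\N(A_n)$. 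This is precisely why the paper never proves (iii)$\Rightarrow$(i) from the raw retractability data: it proves (iii)$\Rightarrow$(ii) directly (Proposition~\ref{p:charactCorson}, using normality of monotonically retractable spaces and absoluteness of a Urysohn-type separation formula), and obtains (i)$\Leftrightarrow$(ii) by passing to $\beta X$ (Proposition~\ref{p:rSkeletonInBetaX} together with Lemma~\ref{l:separatePointsInCompactification}) and invoking the known compact-space equivalence (Lemma~\ref{l:generskel}), where the skeleton is assembled from an up-directed family of elementary models --- and retractions arising from models \emph{do} commute: if $M\subset M'$, then $r_M\circ r_{M'}=r_{M'}\circ r_M=r_M$ by uniqueness of the point agreeing on $\C(X)\cap M$. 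Your cycle could be repaired by building the skeleton in step (ii)$\Rightarrow$(i) out of your own model-based retractions from (ii)$\Rightarrow$(iii), but as written the route through (iii)$\Rightarrow$(i) fails.

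There is a second, smaller gap inside your (ii)$\Rightarrow$(iii), at the very point you flag as the main obstacle: the \emph{existence} of a point of $\ov{X\cap M_A}$ agreeing with $x$ on all of $\C(X)\cap M_A$ does not follow from countable compactness of $\ov{X\cap M_A}$; a priori the trace of $x$ on $\C(X)\cap M_A$ need not be realized in that closure at all. The paper's Lemma~\ref{l:retrakce} supplies it differently: the evaluation map $\Phi:X\to\er^{\C(X)\cap M}$ is closed (countable compactness of $X$ plus metrizability of $\Phi[X]$), and $\Phi[X\cap M]$ is dense in $\Phi[X]$ by absoluteness of the formula $\exists x\in X\ \forall i\in n\ (\eta(i)(x)\in\zeta(i))$ --- it is elementarity, not compactness, that yields existence; moreover your appeal to the ``countable-tightness/monolithicity consequences of Fact~\ref{f:mrs}'' there is circular, since those are properties of monotonically retractable spaces, i.e.\ of the conclusion of (ii)$\Rightarrow$(iii). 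Two minor points: the network must consist of \emph{finite intersections} $f_1^{-1}(I_1)\cap\dots\cap f_k^{-1}(I_k)$ (Lemma~\ref{l:networkForRetract} needs compactness of $r_A[X]$ to extract finite subcovers; single preimages of basic intervals do not suffice), and your direct (i)$\Rightarrow$(ii) sketch is essentially the known compact-case argument carried out in $X$ itself --- viable because the skeleton is full, whereas the paper prefers the reduction to $\beta X$; otherwise your uniqueness, continuity, and Skolem-function monotonicity steps match the paper's Lemmas~\ref{l:networkForRetract}, \ref{l:retrakce} and \ref{l:BasicSkolem}.
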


Recall that a compact space is Corson if and only if it has a full retractional skeleton; see e. g. \cite[Theorem 3.11]{cuthCMUC}. 
Hence, in case $X$ is compact, the equivalence (i)$\Leftrightarrow$(ii) comes from \cite[Theorem 7]{ban91} (for a simplified and generalized version see \cite[Theorem 30]{kubisSkeleton}, a more detailed proof which suits our situation the most can be found in \cite[Theorem 4.9]{cuthCMUC}).

The rest of this section will be devoted to the proof of Theorem~\ref{t:main}.
We start by proving the implication (iii)$\Rightarrow$(ii). This is the content of the following proposition -- note that it holds even without
assuming countable compactness of $X$. Let us also remark that this already provides a proof of Corollary~\ref{t:charactCorson}.

\begin{proposition}\label{p:charactCorson} Let $X$ be a monotonically retractable space. Then, for any suitable model $M$, $\C_b(X)\cap M$ separates the points of $\ov{X\cap M}$.
\end{proposition}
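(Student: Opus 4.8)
The plan is to establish that for any suitable model $M$, the algebra $\C_b(X)\cap M$ separates points of $\overline{X\cap M}$, exploiting the monotone retractability structure together with the fact that elementary models can be forced to contain the data of the monotone retraction assignment.

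First I would fix the list of formulas and the countable set $Y$ defining ``suitable''. The key objects to throw into $Y$ (or to ensure are elements of every $M\prec(\dots;Y)$) are the assignment $A\mapsto (r_A,\N(A))$ witnessing monotone retractability, together with the space $X$, the function space $\C_b(X)$, and enough machinery (finite tuples, the evaluation map) so that Lemma~\ref{l:predp} applies. Using Lemma~\ref{l:unique-M} and the absoluteness of the relevant formulas, I would arrange that for a suitable $M$, the countable set $A := X\cap M$ lies in $M$ in the appropriate sense, and that the retraction $r_A$, the network $\N(A)$, and $r_A[X]$ all belong to $M$ (here $\omega$-monotonicity and the fact that $\N$ maps countable sets to countable sets are what let the model capture these objects).

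The heart of the argument is to identify $\overline{X\cap M}$ with the fibre structure of $r_A$ and then separate points using functions in the model. Concretely, set $A = X\cap M$; since $A\subset r_A[X]$ and $r_A$ is a retraction with countable network $\N(A)$, the closure $\overline{A}$ is controlled by $r_A$. I would argue that $\overline{X\cap M}\subset r_A[X]$: because $\N(A)$ is a network of $r_A$ that lies in $M$, and $r_A[X]$ has a countable network, $r_A[X]$ is separable with a countable network and hence (being a retract, so closed) is metrizable-like in the sense needed for separation. Given two distinct points $x,y\in\overline{X\cap M}$, monotonicity and the network property supply a set $N\in\N(A)$ separating them; the crucial step is to convert this network separation into separation by an actual continuous bounded function drawn from $\C_b(X)\cap M$. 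For this I would use that $r_A[X]$, having a countable network and being normal, admits countably many continuous functions (living in $M$ because they are definable from $r_A$ and $\N(A)\in M$) that separate its points, and then precompose with $r_A$ to get functions in $\C_b(X)\cap M$ separating $x$ and $y$.

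The main obstacle I anticipate is precisely this last conversion: producing the separating functions inside the model rather than merely in the ambient universe. One must verify that a countable separating family for the space $r_A[X]$ can be chosen by a definable (absolute) procedure from the parameters $r_A,\N(A)$ already in $M$, so that by Lemma~\ref{l:unique-M} such a family — and hence individual separating functions composed with $r_A$ — lands in $\C_b(X)\cap M$. A secondary subtlety is confirming $\overline{X\cap M}\subseteq r_A[X]$ with the right closure (since $X$ need not be compact here, only the monotone retractability is assumed, so I would lean on Fact~\ref{f:mrs}(f),(g) giving countable tightness and $\omega$-monolithicity to guarantee $\overline{X\cap M}$ has a countable network and behaves well). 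Once separation of points of $r_A[X]$ by model-functions is secured, restricting everything to $\overline{X\cap M}$ completes the proof.
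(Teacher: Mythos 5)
Your overall strategy matches the paper's (use the retraction $r_A$ for a countable set $A$ attached to $M$, locate $x,y\in\ov{X\cap M}$ inside $r_A[X]$, separate them through the network $\N(A)$, and convert that into a function in $\C_b(X)\cap M$), but there is a genuine gap at the foundation: you take $A:=X\cap M$ and claim to arrange that $A$ ``lies in $M$'' and that $r_A$ and $\N(A)$ ``belong to $M$.'' None of this can be arranged. The set $X\cap M$ is in general \emph{not} an element of $M$ (it is not definable from parameters in $M$), and consequently neither $r_A$ nor $\N(A)$ is an element of $M$ --- they are values of the assignment at a parameter the model does not contain. Since your key conversion step (``a countable separating family for $r_A[X]$ is definable from the parameters $r_A,\N(A)$ already in $M$, so Lemma~\ref{l:unique-M} puts it in $M$'') rests precisely on these memberships, it fails as stated. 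The paper's way around this is the crucial idea your proposal is missing: define $A=\bigcup\{B\in[X]^{\leq\omega}\setsep B\in M\}$. By Lemma~\ref{l:predp} the countable sets in $M$ are closed under finite unions, so $A$ is an increasing union of sets $B_n\in M$, and $\omega$-monotonicity gives $\N(A)=\bigcup_n\N(B_n)$; each $\N(B_n)$ is a countable element of $M$, hence a subset of $M$. This yields only $\N(A)\subset M$ \emph{elementwise} --- which is all that is needed --- while $A$, $r_A$ and the set $\N(A)$ itself never enter $M$. (With this $A$ one still gets $X\cap M\subset A$, since $\{x\}\in M$ for $x\in X\cap M$, and $\ov{X\cap M}\subset r_A[X]$ because $r_A[X]$ is closed.)

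The second, related repair concerns how the separating function is produced. Rather than trying to define a whole countable separating family for $r_A[X]$ inside $M$, the paper works pairwise and with parameters that genuinely lie in $M$: given $x\neq y$ in $\ov{X\cap M}\subset r_A[X]$, choose $U,V$ open in $r_A[X]$ with disjoint closures around $r_A$-fixed points $x,y$, and pick $N_x,N_y\in\N(A)$ with $x\in N_x\subset r_A^{-1}[U]$, $y\in N_y\subset r_A^{-1}[V]$; then $\ov{N_x}\cap\ov{N_y}=\emptyset$, and since $X$ is normal (Fact~\ref{f:mrs}(f)) there exists $f\in\C_b(X)$ with $f\equiv 0$ on $N_x$ and $f\equiv 1$ on $N_y$. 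The individual sets $N_x,N_y$ are elements of $M$ by the claim above, so Lemma~\ref{l:unique-M} applied to this existential formula (with parameters $N_x,N_y,\C_b(X)\in M$) yields such an $f$ in $\C_b(X)\cap M$, and $f(x)=0\neq 1=f(y)$. Note also that no precomposition with $r_A$ and no appeal to $\omega$-monolithicity or countable tightness is needed here; normality of $X$ together with the network property of $\N(A)$ does the entire job. Your instinct that the conversion to model functions is the main obstacle was exactly right --- but the resolution requires the modified definition of $A$ and pairwise absoluteness, not membership of the retraction data in $M$.
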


\begin{proof}
Suppose that for any countable set $A\subset X$, we have a retraction $r_A:X\to X$ and a family $\N(A)$ that witness the monotone retractability of $X$. Fix formulas $\phi_1,\ldots,\phi_n$ containing the formulas $\theta_1,\dots,\theta_m$ from Lemma \ref{l:predp} and the formula (and its subformulas) marked by $(*)$ in the proof below, and a countable set $Y$ containing the set $Y_0$ from Lemma \ref{l:predp} and the set $\{\C_b(X),\N\}$. Fix $M\prec(\phi_1,...,\phi_n;\; Y)$. Put $A = \bigcup\{B\in[X]^{\leq\omega}\setsep B\in M\}$. This is a countable set.
\begin{claim}\label{claim1}$\N(A)\subset M$\end{claim}
\begin{proof}By Lemma \ref{l:predp}, the set $\{B\in[X]^{\leq\omega}\setsep B\in M\}$ is closed under finite unions. Hence, there exists an increasing sequence $(B_n)_{n\in\en}$ with $A = \bigcup_{n\in\en} B_n$ and $B_n\in M$ for every $n\in\en$. Since the assignment $\N$ is $\omega$-monotone, we have $\N(A) = \bigcup_{n\in\en}\N(B_n)$. Fix $n\in\en$. By Lemma~\ref{l:predp}, $\N(B_n)\in M$ and $\N(B_n)\subset M$. Consequently, $\N(A)\subset M$.\end{proof}
\begin{claim}\label{claim2}$\ov{X\cap M}\subset r_A[X]$\end{claim}
\begin{proof}Fix $x\in X\cap M$. Then $\{x\}\in [X]^{\leq\omega}\cap M$ (by Lemma~\ref{l:predp}); hence, $x \in A$. Thus, $X\cap M\subset A$ and $\ov{X\cap M}\subset \ov{A}\subset r_A[X]$.\end{proof}

Fix $x,y\in\ov{X\cap M}$, $x\neq y$. By Claim \ref{claim2}, $x,y\in r_A[X]$. Find sets $U\in\tau(x,r_A[X])$ and $V\in\tau(y,r_A[X])$ such that $\ov{U}\cap\ov{V} = \emptyset$. Since $\N(A)$ is a network of $r_A$, we can find $N_x\in\N(A)$ and $N_y\in\N(A)$ with $x\in N_x\subset r_A^{-1}[U]$ and $y\in N_y\subset r_A^{-1}[V]$. Note that $\ov{N_x}\cap\ov{N_y} = \emptyset$ and recall that $X$ is normal by Fact~\ref{f:mrs}. By Claim \ref{claim1}, $N_x, N_y\in M$. Hence, by Lemma \ref{l:unique-M} and the absoluteness of the formula (and its subformula)
\[
\exists f\in\C_b(X)\quad(\forall a\in N_x: f(a) = 0 \wedge \forall b\in N_y: f(b) = 1),\eqno{(*)}
\]
there is $f\in\C_b(X)\cap M$ with $f(x) = 0 \neq 1 = f(y)$. Thus, $\C_b(X)\cap M$ separates the points of $\ov{X\cap M}$.
\end{proof}

We continue with proving the equivalence (i)$\Leftrightarrow$(ii) from Theorem~\ref{t:main}. In fact, this equivalence
is essentially known due to the following result:

\begin{lemma}\label{l:generskel} {\rm (\cite[Theorem 30]{kubisSkeleton}, see also \cite[Theorem 4.9]{cuthCMUC})}
Let $K$ be a compact space and $X\subset K$ a dense countably compact subset. The following assertions are
equivalent:
\begin{itemize}
	\item[(i)] $X$ is induced by a retractional skeleton in $K$.
	\item[(ii)] For any suitable model $M$, $\C(K)\cap M$ separates the points of $\ov{X\cap M}$.
\end{itemize}
\end{lemma}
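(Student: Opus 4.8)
The plan is to prove both implications through the elementary-model machinery of Section~3, building the skeleton out of a rich, directed family of countable elementary submodels; this is the standard construction, so I will only indicate the main steps.

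For (i)$\Rightarrow$(ii): let $\mathfrak{s}=\{r_s\}_{s\in\Gamma}$ be a retractional skeleton in $K$ with $D(\mathfrak{s})=X$, and put $K,X,\Gamma$ and the assignment $s\mapsto r_s$ into the parameter set $Y$, so that these objects belong to every suitable $M$. Fix such an $M$. First I would note that $\Gamma\cap M$ is a countable up-directed subset of $\Gamma$ (directedness of $\Gamma$ reflects into $M$ via Lemma~\ref{l:unique-M}), pick an increasing sequence $(s_n)$ cofinal in it, and let $t=\sup_n s_n$, which exists by axiom~(iii) of the skeleton. Reflecting the statement ``$\exists s\in\Gamma\colon x\in r_s[K]$'' (true since $x\in X=\bigcup_s r_s[K]$) shows $X\cap M\subseteq\bigcup_{s\in\Gamma\cap M}r_s[K]\subseteq r_t[K]$, hence $\ov{X\cap M}\subseteq r_t[K]$. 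Since each $r_{s_n}[K]$ is metrizable compact, for every $n$ there is, by Lemma~\ref{l:unique-M}, a countable family $G_n\subseteq\C(K)\cap M$ whose restrictions to $r_{s_n}[K]$ separate the points of $r_{s_n}[K]$. Now if $x\neq y$ lie in $\ov{X\cap M}\subseteq r_t[K]$, then $r_{s_n}(x)\neq r_{s_n}(y)$ for some $n$ (otherwise $r_t(x)=\lim_n r_{s_n}(x)=\lim_n r_{s_n}(y)=r_t(y)$ would force $x=y$), so some $g\in G_n$ has $g(r_{s_n}(x))\neq g(r_{s_n}(y))$; then $g\circ r_{s_n}\in\C(K)\cap M$ separates $x$ and $y$.

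For (ii)$\Rightarrow$(i): fix a suitable $M$ and consider the continuous map $\Phi_M\colon K\to\er^{\C(K)\cap M}$, $\Phi_M(z)=(f(z))_{f\in\C(K)\cap M}$, whose range is metrizable compact. Assumption (ii) says $\Phi_M$ is injective on $\ov{X\cap M}$, hence a homeomorphism of $\ov{X\cap M}$ onto its image. The crux is to show $\Phi_M[\ov{X\cap M}]=\Phi_M[K]$: for this I would prove that $\Phi_M[X\cap M]$ is dense in $\Phi_M[K]$ by reflection — a basic neighbourhood in $\er^{\C(K)\cap M}$ meeting $\Phi_M[K]$ is cut out by finitely many $f_1,\dots,f_m\in\C(K)\cap M$ and rational intervals, the statement that some point of $X$ lands in it holds (by density of $X$ in $K$) and has all its parameters in $M$, so Lemma~\ref{l:unique-M} produces a witness in $X\cap M$. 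Consequently $\Phi_M|_{\ov{X\cap M}}$ is a homeomorphism onto $\Phi_M[K]$, and I define $r_M=(\Phi_M|_{\ov{X\cap M}})^{-1}\circ\Phi_M\colon K\to K$, a continuous retraction onto $\ov{X\cap M}$ with metrizable compact range, characterised by $f(r_M(z))=f(z)$ for all $f\in\C(K)\cap M$. I would then check $r_M[K]=\ov{X\cap M}\subseteq X$: the $X$-closure of the countable set $X\cap M$ is countably compact (being closed in the countably compact $X$) and metrizable (sitting inside the metrizable $\ov{X\cap M}$), hence compact, hence closed in $K$, so it coincides with $\ov{X\cap M}$ and lies in $X$.

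To organise these retractions into a skeleton I would fix an $\omega$-monotone Skolem function $\psi$ for the relevant formulas on a large set $R\supseteq\{K,X,\C(K)\}\cup Y$ (Lemma~\ref{l:BasicSkolem}) and take $\Gamma=\{M\in[R]^{\leq\omega}\setsep Y\subseteq M=\psi(M)\}$ ordered by inclusion. Using property~(iii) of $\omega$-monotonicity one sees that $\Gamma$ is up-directed and closed under countable increasing unions, and that every point of $R$ lies in some member of $\Gamma$; each $M\in\Gamma$ is a suitable model. For $M\subseteq N$ in $\Gamma$ the identities $r_M=r_M\circ r_N=r_N\circ r_M$ follow from $\C(K)\cap M\subseteq\C(K)\cap N$ and $\ov{X\cap M}\subseteq\ov{X\cap N}$; for an increasing sequence with union $N=\bigcup_n M_n\in\Gamma$ the equality $r_N(z)=\lim_n r_{M_n}(z)$ follows because $\Phi_N$ is a homeomorphism on $\ov{X\cap N}$ and each coordinate $f\in\C(K)\cap N$ lies in some $M_n$, giving $f(r_{M_n}(z))=f(z)=f(r_N(z))$ eventually. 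Axiom~(iv), $z=\lim_{M}r_M(z)$, follows from a Urysohn function: given a neighbourhood $U$ of $z$ choose $f\in\C(K)$ with $f(z)=0$ and $f\equiv1$ off $U$; then for every $M\in\Gamma$ containing $f$ one has $f(r_M(z))=0$, so $r_M(z)\in U$. Finally $D(\mathfrak{s})=\bigcup_{M\in\Gamma}\ov{X\cap M}=X$, since each range lies in $X$ while every $x\in X$ belongs to $X\cap M$ for some $M\in\Gamma$. The main obstacle is the reflection step establishing $\Phi_M[\ov{X\cap M}]=\Phi_M[K]$ — equivalently, that each $z\in K$ agrees on all of $\C(K)\cap M$ with some point of $\ov{X\cap M}$ — for this is exactly where hypothesis~(ii), the elementarity of $M$, and the density of $X$ are combined to make $r_M$ well defined; the secondary point requiring care is the bookkeeping that makes $\Gamma$ simultaneously up-directed, $\sigma$-complete and cofinal, which is handled by passing to the fixed points of the $\omega$-monotone Skolem function.
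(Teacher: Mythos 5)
Your proposal is correct, but it follows a genuinely different route from the paper: the paper does not prove Lemma~\ref{l:generskel} itself at all, it quotes \cite[Theorem 30]{kubisSkeleton} and \cite[Theorem 4.9]{cuthCMUC} (stated there under the stronger hypothesis that $X$ is countably \emph{closed} in $K$) and only supplies a short bridge — for (i)$\Rightarrow$(ii), countable closedness is automatic by Fact~\ref{f:basics}, and for (ii)$\Rightarrow$(i), the quoted theorem gives $X\subset Y$ with $Y$ induced by a retractional skeleton, whence $X=Y$ by \cite[Theorem 32]{kubisSkeleton}. You instead reprove the quoted result from scratch by the elementary-submodel construction, which is essentially the argument of the cited references and parallels machinery the paper develops anyway: your map $\Phi_M$, the density of $\Phi_M[X\cap M]$ via reflection, and $r_M=(\Phi_M\restriction_{\ov{X\cap M}})^{-1}\circ\Phi_M$ reproduce the proof of Lemma~\ref{l:retrakce}, and your fixed-point family $\Gamma$ of a Skolem function mirrors the $\omega$-monotone bookkeeping at the end of Section~4. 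A genuine merit of your version is that it absorbs the countably-closed issue natively: hypothesis (ii) makes $\ov{X\cap M}$ metrizable, so the closure of $X\cap M$ \emph{in} $X$ is countably compact and metrizable, hence compact, hence equal to the $K$-closure — exactly the point where the paper must detour through Kubi\'s's Theorem 32. The paper's route buys brevity; yours buys self-containedness. Two glossed details deserve a word, though neither is a real gap: in the density-reflection step, Lemma~\ref{l:unique-M} applies only to a fixed formula, so the ``finitely many $f_1,\dots,f_m\in\C(K)\cap M$ with rational intervals'' must be encoded as single elements of $M$ (surjections $\eta:n\to F$ and $\zeta:n\to\B$, as in the proof of Lemma~\ref{l:retrakce}); and in (i)$\Rightarrow$(ii) the increasing cofinal sequence in $\Gamma\cap M$ may be eventually constant, in which case one takes $t$ to be its maximum instead of invoking axiom (iii) of the skeleton.
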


In fact, the quoted results use a slightly stronger assumption that $X$ is countably closed in $K$. However, if $X$ is induced by a retractional skeleton, it is automatically countably closed by Fact~\ref{f:basics}, so this assumption is not used for (i)$\Rightarrow$(ii).
For the opposite implication, by \cite[Theorem 4.9]{cuthCMUC} it follows from (ii) that $X$ is contained in a set $Y$ induced by retractional skeleton.
Now it follows easily from \cite[Theorem 32]{kubisSkeleton} that $X=Y$. 

The proof of the equivalence (i)$\Leftrightarrow$(ii) from Theorem~\ref{t:main} will be done by reducing the situation to the use
of Lemma~\ref{l:generskel}. More precisely, let us consider $K=\beta X$. We will show that the validity of assertion (i) in Theorem~\ref{t:main}
is equivalent to the validity of (i) in Lemma~\ref{l:generskel} and similarly for the respective assertions (ii).
We begin with the assertions (ii). The key tool to do that is the following easy lemma.

\begin{lemma}\label{l:separatePointsInCompactification} Let $L$ be a compact space and $A\subset L$ a dense countably compact subset. Let $S\subset \C(L)$ be a countable set separating the points of $A$. Then $S$ separates the points of $L$.
\end{lemma}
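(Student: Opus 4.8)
The plan is to exploit the fact that $S$ is a \emph{countable} family of continuous functions separating points of the dense countably compact set $A$, and to promote this separation to all of $L$ by a countable-compactness argument. First I would define the evaluation map $\Phi\colon L\to\er^S$ by $\Phi(x) = (s(x))_{s\in S}$, which is continuous since each $s\in S$ is continuous. Because $S$ is countable, the target $\er^S$ is metrizable. The statement ``$S$ separates the points of $L$'' is precisely the injectivity of $\Phi$, so the goal reduces to showing $\Phi$ is injective, knowing that $\Phi\restriction A$ is injective.

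Next I would consider two distinct points $x,y\in L$ and suppose for contradiction that $\Phi(x)=\Phi(y)$, i.e. $s(x)=s(y)$ for all $s\in S$. Since $A$ is dense in $L$, there are nets in $A$ converging to $x$ and to $y$; however, nets are awkward to combine with the metrizability of $\er^S$, so instead I would pass through the image. Consider $\Phi[A]\subset\Phi[L]\subset\er^S$. Since $A$ is dense in $L$ and $\Phi$ is continuous, $\Phi[A]$ is dense in $\Phi[L]$, and $\Phi[A]$ is countably compact as a continuous image of the countably compact set $A$. The key point is that $\er^S$ is metrizable, hence first countable, so countable compactness of $\Phi[A]$ together with the metric structure should let me extract convergent \emph{sequences} rather than nets. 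Specifically, the common value $z=\Phi(x)=\Phi(y)\in\Phi[L]$ is a limit of points of the dense set $\Phi[A]$; I would choose a sequence $(a_n)$ in $A$ with $\Phi(a_n)\to z$ in the metric of $\er^S$.

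The heart of the argument is then to lift this back to $L$. Since $A$ is countably compact and metrizable-image-friendly, I expect that the sequence $(a_n)$ (or a subsequence) clusters at some point $a\in A$ — here I would use that $A$ is countably compact together with countable tightness considerations, or more directly the sequential compactness available in this setting. By continuity, $\Phi(a)=z=\Phi(x)$. Since $\Phi\restriction A$ is injective, such a cluster point $a$ is uniquely determined, and any point of $L$ whose $\Phi$-image is $z$ must agree with $a$ on all of $S$; the separation of $A$ by $S$ forces both $x$ and $y$ to be indistinguishable from $a$ by functions in $S$, and a Hausdorff/regularity argument using continuous functions in $S$ pins down $x=a=y$, contradicting $x\neq y$.

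The main obstacle I anticipate is the passage from the metrizable image back to the non-metrizable space $L$: injectivity of $\Phi$ on $A$ gives control only on $A$, and I must rule out that two distinct points of $L\setminus A$, or a point of $L\setminus A$ and a point of $A$, collapse under $\Phi$. The cleanest route is probably to observe that $\Phi[L]$ is a compact metrizable space (continuous image of compact $L$), that $\Phi[A]$ is a dense countably compact subset of it, and that a dense countably compact subset of a metric space must be the whole space — so $\Phi[A]=\Phi[L]$ and in fact $A$ meets every $\Phi$-fiber. Then injectivity of $\Phi\restriction A$ combined with density of $A$ and continuity of the $s\in S$ should yield global injectivity. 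I would be careful to justify that distinct fibers cannot be merged, which is exactly where the countable compactness of $A$ (rather than mere density) is essential.
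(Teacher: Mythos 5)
Your route through the image is fine as far as it goes: since $S$ is countable, $\er^S$ is metrizable, $\Phi[L]$ is compact metrizable, and $\Phi[A]$ is a countably compact -- hence compact, hence closed -- dense subset of $\Phi[L]$, so indeed $\Phi[A]=\Phi[L]$ and $A$ meets every fiber of $\Phi$. The genuine gap is your last step. From ``each fiber of $\Phi$ meets $A$'', ``$\Phi\restriction_A$ is injective'' and density of $A$ you cannot conclude that $\Phi$ is injective on $L$: the fiber through $x\neq y$ contains exactly one point $a$ of $A$, but nothing prevents it from also containing several points of $L\setminus A$. The ``Hausdorff/regularity argument using continuous functions in $S$'' you invoke to get $x=a=y$ cannot exist, because every $h\in S$ is by assumption constant on the fiber, so members of $S$ can never pin down $x=a$; and no functions outside $S$ have been brought into play. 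That the ingredients you assembled are truly insufficient is seen by dropping countable compactness: take $L=[0,1]^2$, let $A$ be the graph of a function $[0,1]\to[0,1]$ with dense graph, and $S=\{\pi_1\}$. Then $\Phi=\pi_1$ is injective on $A$, $\Phi[A]=\Phi[L]$, $A$ is dense, yet $\Phi$ is far from injective on $L$. So countable compactness of $A$ must be used a \emph{second} time, after fixing the pair $x\neq y$, and your proposal never does this.

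The repair -- which is exactly the paper's proof -- is to adjoin one more function: choose $g\in\C(L)$ with $g(x)\neq g(y)$ (possible as $L$ is compact Hausdorff, hence Tychonoff) and run your fiber argument for the refined map $\Phi'=(\Phi,g):L\to\er^S\times\er$, whose range is still metrizable. Equivalently, consider
\[
A_x=\bigcap_{h\in S\cup\{g\}}\{t\in L\setsep h(t)=h(x)\},\qquad A_y=\bigcap_{h\in S\cup\{g\}}\{t\in L\setsep h(t)=h(y)\},
\]
which are nonempty $G_\delta$ sets, disjoint because of $g$. A dense countably compact subset of a compact space meets every nonempty $G_\delta$ set -- the paper cites \cite[Lemma 1.11]{kalendaSurvey} for this, and your own sequence-plus-cluster-point extraction proves it: nest open sets $V_n$ with $\ov{V_{n+1}}\subset V_n$ inside the $G_\delta$, pick $a_n\in A\cap V_n$ by density, and take a cluster point of $(a_n)$ in $A$ by countable compactness. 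This yields $y_1\in A\cap A_x$ and $y_2\in A\cap A_y$ with $g(y_1)\neq g(y_2)$, so $y_1\neq y_2$, while $h(y_1)=h(x)=h(y)=h(y_2)$ for all $h\in S$, contradicting that $S$ separates the points of $A$. In short: your steps (1)--(3) are a correct but insufficient half of the argument; the missing idea is to reuse countable compactness against the $G_\delta$ fibers of the system enlarged by a separating function $g$.
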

\begin{proof}Arguing by contradiction, let $x_1,x_2\in L$ be such that $x_1\neq x_2$ and $f(x_1) = f(x_2)$ for every $f\in S$. Find $g\in\C(L)$ with $g(x_1)\neq g(x_2)$. Denote, for $i\in\{1,2\}$,
\[
A_i = \bigcap_{h\in S\cup\{g\}}\{t\in L\setsep h(x_i) = h(t)\}.
\]
Then $A_1, A_2$ are nonempty $G_\delta$ sets and $A_1\cap A_2 = \emptyset$. Hence, e. g. by \cite[Lemma 1.11]{kalendaSurvey}, there are $y_1\in A\cap A_1$ and $y_2\in A\cap A_2$. This is a contradiction because $S$ does not separate the points $y_1\neq y_2$ from $A$.
\end{proof}

Let us now show the equivalence of the respective assertions (ii) from Theorem~\ref{t:main} and Lemma~\ref{l:generskel}. 
Suppose that the assertion (ii) from Theorem~\ref{t:main} holds. Let $M$ be such a suitable model containing moreover the
extension map $f\mapsto \beta f$, $f\in \C(X)$, and the restriction map $f\mapsto f\restriction_X$, $f\in\beta X$. Then
$\C(X)\cap M=\{f\restriction_X:f\in \C(\beta X)\cap M\}$. Hence the validity of assertion (ii) from Lemma~\ref{l:generskel} follows from
the previous lemma applied to $L=\overline{X\cap M}^{\beta X}$, $A=\overline{X\cap M}^X$ and $S=\{f\restriction_L:f\in C(\beta X)\cap M\}$.
The converse implication can be proved in the same way, just the final use of the previous lemma is not necessary.

 The equivalence of the respective assertions (i) is the content of the following proposition.

\begin{proposition}\label{p:rSkeletonInBetaX} Let $X$ be a countably compact space. Then $X$ has a full retractional skeleton if and only if it is induced by a retractional skeleton in $\beta X$.

Moreover,  if $\{r_s\}_{s\in\Gamma}$ is a full retractional skeleton in $X$, then there is a retractional skeleton $\{R_s\}_{s\in\Gamma}$ in $\beta X$ inducing $X$ such that $R_s\restriction_X = r_s$ for every $s\in\Gamma$.
\end{proposition}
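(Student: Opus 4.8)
The plan is to prove the two directions separately, with the bulk of the work going into the forward (``moreover'') direction, since the reverse direction follows quickly from the facts already assembled. For the reverse implication, suppose $X$ is induced by a retractional skeleton in $\beta X$. Then by Fact~\ref{f:basics}(i) the set $X$ is countably closed in $\beta X$, and if $\{R_s\}_{s\in\Gamma}$ is the skeleton inducing $X$ in $\beta X$, I would check that each $R_s$ maps $X$ into $X$: indeed $R_s[\beta X]\subseteq D(\{R_s\})=X$, so the restrictions $r_s = R_s\restriction_X$ are well-defined retractions of $X$. One then verifies directly that $\{r_s\}_{s\in\Gamma}$ inherits properties (i)--(iv) of a retractional skeleton from $\{R_s\}$ (the images $r_s[X]=R_s[\beta X]$ are unchanged, and the limit/compatibility conditions are pointwise statements preserved under restriction), and that $D(\{r_s\})=\bigcup_s R_s[\beta X]=X$, so the restricted skeleton is full.

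For the forward direction, start from a full retractional skeleton $\{r_s\}_{s\in\Gamma}$ in $X$. Since $r_s[X]$ is a metrizable compact contained in the compact space $\beta X$, each $r_s\colon X\to r_s[X]\subseteq\beta X$ is a continuous map into a compact space, and I would extend it to a continuous map $R_s=\beta r_s\colon\beta X\to r_s[X]$ by the universal property of the \v Cech--Stone compactification. The candidate skeleton in $\beta X$ is then $\{R_s\}_{s\in\Gamma}$, indexed by the same directed set $\Gamma$, and by construction $R_s\restriction_X=r_s$ and $R_s[\beta X]\subseteq r_s[X]=R_s[X]$, forcing $R_s[\beta X]=r_s[X]$. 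The main task is to verify that $\{R_s\}$ satisfies the four axioms of a retractional skeleton and induces exactly $X$.

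The properties that are essentially automatic are (i), since $R_s[\beta X]=r_s[X]$ is metrizable compact. For the remaining axioms I would exploit that each is an equation between continuous $\beta X$-valued functions which already holds on the dense subset $X$; by density of $X$ in $\beta X$ and the Hausdorff property of the (compact, hence Hausdorff) target, two continuous maps agreeing on $X$ agree everywhere. Concretely, for (ii) the identities $R_s=R_s\circ R_t=R_t\circ R_s$ hold on $X$ (where they reduce to the corresponding identities for the $r_s$ using $R_t\restriction_X=r_t$ and $r_t[X]\subseteq X$), and both sides are continuous on $\beta X$, so they hold throughout. For the idempotency making each $R_s$ a genuine retraction, note $R_s\circ R_s$ agrees with $R_s$ on $X$ and on $r_s[X]\subseteq X$, whence $R_s^2=R_s$. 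The delicate point is condition (iii): for $s_0<s_1<\cdots$ with $t=\sup_n s_n$, I must show $R_t(z)=\lim_n R_{s_n}(z)$ for \emph{every} $z\in\beta X$, not only on the dense set $X$.

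The hardest step will be exactly this verification of the limit conditions (iii) and (iv) at points of $\beta X\setminus X$, since a pointwise limit of continuous functions need not be continuous and density of $X$ does not by itself transfer a pointwise-convergence statement from $X$ to all of $\beta X$. I would handle it as follows. The maps $R_{s_n}$ all take values in the single metrizable compact $r_t[X]=R_t[\beta X]$ (using (ii) to see $r_{s_n}[X]\subseteq r_t[X]$). Fix $z\in\beta X$ and a net $(x_\alpha)$ in $X$ converging to $z$; then for each $n$, continuity gives $R_{s_n}(z)=\lim_\alpha r_{s_n}(x_\alpha)$ and $R_t(z)=\lim_\alpha r_t(x_\alpha)$. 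Working inside the compact metric image and using that on $X$ the relation $r_t(x)=\lim_n r_{s_n}(x)$ holds together with the compatibility $r_{s_n}=r_{s_n}\circ r_t$, I would show that the sequence $R_{s_n}(z)$ is a sequence in the metrizable compact $r_t[X]$ whose every point is fixed by the limit relation, and argue that $R_{s_n}(z)=r_{s_n}(R_t(z))$ (this follows because $R_{s_n}\circ R_t=R_{s_n}$ agrees with $R_{s_n}$ on $X$, hence everywhere), reducing the claim to $R_t(z)=\lim_n r_{s_n}(R_t(z))$, which is precisely axiom (iii) for the original skeleton applied at the point $R_t(z)\in r_t[X]\subseteq X$. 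The analogous reduction $R_s(z)=r_s(z)$-type identities, together with $x=\lim_s r_s(x)$ valid on $X=r_s[X]$-points, gives (iv) for $z\in X$ and shows the induced set satisfies $D(\{R_s\})=\bigcup_s R_s[\beta X]=\bigcup_s r_s[X]=X$; thus $\{R_s\}$ induces $X$ and $X$ is induced by a retractional skeleton in $\beta X$, completing the proof.
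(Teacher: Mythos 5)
Your reverse direction and your handling of conditions (i)--(iii) are correct and essentially coincide with the paper's argument: you extend each $r_s$ to $R_s=\beta r_s$, observe $R_s[\beta X]=r_s[X]$, get (ii) and idempotency by the ``dense agreement'' principle, and your reduction of (iii) is exactly the paper's trick --- since $R_t(z)\in r_t[X]\subset X$ and $R_{s_n}\circ R_t=R_{s_n}$ (by density), the axiom (iii) of the original skeleton applied at the point $R_t(z)\in X$ yields $R_t(z)=\lim_n R_{s_n}(z)$ for all $z\in\beta X$.

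However, there is a genuine gap at condition (iv), and it is precisely the hardest part of the proposition. Condition (iv) requires $x=\lim_{s\in\Gamma}R_s(x)$ for \emph{every} $x\in\beta X$, whereas your argument, as you yourself write, ``gives (iv) for $z\in X$'' only; for $z\in\beta X\setminus X$ you give nothing, and nothing soft can work. The trick that rescued (iii) has no analogue here: in (iii) you could precompose with $R_t$ and transfer the statement to a point of $X$, but in (iv) the required limit is the point $z$ itself, which lies outside $X$, and pointwise convergence of a net of maps on a dense set does not pass to the closure. The paper devotes the bulk of its proof to exactly this step. Since $\beta X$ is compact, it suffices to rule out a cluster point $y\neq x$ of the net $\{R_s(x)\}_{s\in\Gamma}$: one separates $x$ and $y$ by open sets $U,V$ with $\overline U\cap\overline V=\emptyset$ and inductively builds $s_1\leq s_1'\leq s_2\leq\cdots$, nested neighborhoods $U_n$ of $x$ with $\overline{U_{n+1}}\subset U_n$ and $R_{s_n}(U_n)\subset V$, and points $x_n\in U_n\cap X$ with $r_{s_n'}(x_n)=x_n$. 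This last interpolation is where \emph{fullness} of the skeleton is indispensable --- note that your proposal never uses fullness except to compute $D(\{R_s\})$, which is a symptom of the missing step. Sequential compactness of $X$ (Fact~\ref{f:basics}) then gives $x_n\to z\in X$ along a subsequence; with $t=\sup_n s_n$ one computes $R_t(z)=\lim_n R_t(R_{s_n'}(x_n))=\lim_n x_n=z\in\overline U$, while $z\in\bigcap_n U_n$ together with the already-proved condition (iii) forces $R_t(z)\in\overline V$, a contradiction. Without an argument of this kind you have not verified that $\{R_s\}_{s\in\Gamma}$ is a retractional skeleton in $\beta X$ at all, so the forward implication remains unproved.
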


\begin{proof} We start by proving the `if part'.  Let $\mathfrak{s} = \{R_s\}_{s\in\Gamma}$ be a retractional skeleton in $\beta X$ with $D(\mathfrak{s}) = X$. Then $\mathfrak{s'} = \{R_s\restriction_X\}_{s\in\Gamma}$ is a full retractional skeleton in $X$. Indeed, since $X$ is induced by $\mathfrak{s}$, we have, for each $s\in\Gamma$, $R_s[\beta X]\subset X$. Since $R_s$ is a retraction, we get $R_s[\beta X]=R_s[R_s[\beta X]]\subset R_s[X]$, hence $R_s[X]=R_s[\beta X]$. It follows that ranges of $R_s\restriction_X$ cover $X$. Further, it is immediate that (i)--(iv) from the definition of a retractional skeleton are satisfied. This finishes the proof.

To show the `only if' part let $\mathfrak{s} = \{r_s\}_{s\in\Gamma}$ be a full retractional skeleton in $X$. Fix $s\in\Gamma$. We extend the retraction $r_s:X\to\beta X$ to a continuous function $R_s:\beta X\to\beta X$. Then $R_s$ is a retraction because $R_s\circ R_s = R_s$ on a dense subset $X$; hence, $R_s\circ R_s = R_s$ on $\beta X$. Moreover, $r_s[X] = R_s[\beta X]$ because $R_s[X] = r_s[X]$ is compact and dense in $R_s[\beta X]$; 
hence, $R_s[\beta X] = r_s[X] \subset X$ is a metrizable compact. Now, it is immediate that $\mathfrak{s'} = \{R_s\}_{s\in\Gamma}$ is a system of retractions on $\beta X$ satisfying (i), (ii) from the definition of a retractional skeleton and $D(\mathfrak{s'}) = X$. In order to verify that (iii) from the definition of a retractional skeleton holds, let us fix a sequence $s_0 < s_1 < \cdots$ in $\Gamma$ with $t = \sup_{n\in\omega}s_n\in\Gamma$ and $x\in\beta X$. Then $R_t(x)\in X$. Therefore, $R_t(x) = r_t(R_t(x)) = \lim_{n\to\infty} r_{s_n}(R_t(x)) = \lim_{n\to\infty} R_{s_n}(x)$.

Finally, let us fix $x\in\beta X$. It remains to show that $\lim_{s\in\Gamma} R_s(x) = x$. Arguing by contradiction, let $y\in\beta X$ be a cluster point of the net $\{R_s(x)\}_{s\in\Gamma}$ with $y\neq x$. Fix $U\in\tau(x,\beta X)$ and $V\in\tau(y,\beta X)$ with $\ov{U}\cap\ov{V} = \emptyset$ and $s_0\in\Gamma$. We inductively find sequences $\{s_n\}_{n\in\en}$, $\{s'_n\}_{n\in\en}$ of indices from $\Gamma$, $\{x_n\}_{n\in\en}$ of points from $X$ and $\{U_n\}_{n\in\en}$ of sets from $\tau(x,\beta X)$ such that, for every $n\in\en$,

\begin{itemize}
	\item $\ov{U_1}\subset U$,
	\item $s_n\leq s'_n\leq s_{n+1}$, $\ov{U_{n+1}}\subset U_n$,
	\item $R_{s_n}(U_n)\subset V$,
	\item $x_n\in U_n\cap X$ and
	\item $R_{s'_n}(x_n) = x_n$.
\end{itemize}

Let us describe the inductive process. Let $s'_{n-1}$ and $U_{n-1}$ be defined (we put $s'_0 = s_0$ and $U_0 = U$ if $n=1$). Find $s_n\geq s'_{n-1}$ with $R_{s_n}(x)\in V$ and $U_n\in\tau(x,\beta X)$ with $\ov{U_n}\subset U_{n-1}$ and $R_{s_n}(U_n)\subset V$. Since  $X$ is dense in $\beta X$ we can find $x_n\in X\cap U_n$. Finally, we choose $s'_n$ to be such that $s'_n\geq s_n$ and $R_{s'_n}(x_n) = x_n$.

By Fact~\ref{f:basics}, $X$ is sequentially compact and so we may without loss of generality assume that the sequence $\{x_n\}_{n\in\en}$ converges to some $z\in X$. For $t = \sup_{n\in\omega}s_n$ we get
\[
R_t(z) = \lim_{n\to\infty} R_t(x_n) = \lim_{n\to\infty} R_t(R_{s'_n}(x_n)) = \lim_{n\to\infty} R_{s'_n}(x_n) = \lim_{n\to\infty} x_n = z.
\]
Hence, $R_t(z) = z\in\ov{U}$. Moreover, $x_k\in U_n$ for every $k\geq n$; hence, $z\in\bigcap_{n\in\en}\ov{U_n}$ and 
\[
R_t(z)\in R_t\left[\bigcap_{n\in\en} \ov{U_n}\right] = R_t\left[\bigcap_{n\in\en} U_n\right] \subset\ov{V},
\]
which is a contradiction with $\ov{U}\cap\ov{V} = \emptyset$.

\smallskip

The `moreover part' follows immediately from the construction.
\end{proof}

To complete the proof of Theorem~\ref{t:main} we will show (ii)$\Rightarrow$(iii). 
The idea is to use a Skolem function given by (ii) to create the required $\omega$-monotone mapping.
We begin with the following lemma which gives a formula for a network of a given retraction.

\begin{lemma}\label{l:networkForRetract}
Let $X$ be a space and $r:X\to X$ a retraction with a compact range. Let $S\subset\C(X)$ be a subset separating the points of $r[X]$ such that $f\circ r = f$ for every $f\in S$. Then 
\begin{multline*}\N(S) = \{f_1^{-1}(I_1)\cap\dots\cap f_k^{-1}(I_k)\setsep f_1,\dots,f_k\in S,\\ I_1,\dots, I_k\mbox{ are open intervals with rational endpoints}\}\end{multline*} is a network of $r$.
\end{lemma}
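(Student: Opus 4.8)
The plan is to verify directly the definition of a network of a continuous map: fixing $x\in X$ and an open set $U\subset X$ with $r(x)\in U$, I must produce some $N\in\N(S)$ with $x\in N$ and $r[N]\subset U$. Write $K=r[X]$, which is compact by hypothesis, and set $y=r(x)\in K\cap U$. The key observation, used repeatedly, is that the identity $f\circ r=f$ for $f\in S$ forces $f(x)=f(r(x))=f(y)$ and, for any $z\in X$, $f(r(z))=f(z)$. Thus membership of $x$ (or of $r(z)$) in a set of the form $f_1^{-1}(I_1)\cap\dots\cap f_k^{-1}(I_k)$ is governed entirely by the values $f_i(y)$ (respectively $f_i(z)$).

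First I would reduce the problem to a statement about the compact set $K$: it suffices to find $f_1,\dots,f_k\in S$ and open intervals $I_1,\dots,I_k$ with rational endpoints such that $f_i(y)\in I_i$ for each $i$ and such that $\{w\in K\setsep f_i(w)\in I_i\text{ for all }i\}\subset U$. Indeed, setting $N=\bigcap_{i} f_i^{-1}(I_i)\in\N(S)$, the condition $f_i(y)\in I_i$ together with $f_i(x)=f_i(y)$ yields $x\in N$; and for any $z\in N$ we have $r(z)\in K$ with $f_i(r(z))=f_i(z)\in I_i$, so $r(z)$ lies in the displayed subset of $K$, whence $r(z)\in U$.

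The heart of the matter is the compactness argument producing these finitely many functions and intervals, and this is where the separation hypothesis enters. Let $F=K\setminus U$, which is compact and does not contain $y$; if $F=\emptyset$ the claim is trivial, since then $r[N]\subset K\subset U$ for any $N\in\N(S)$ and one only needs $x\in N$. Otherwise, for each $w\in F$ the set $S$ separates $w$ from $y$, so I can pick $f_w\in S$ with $f_w(w)\neq f_w(y)$ and then choose disjoint open intervals $I_w\ni f_w(y)$ and $J_w\ni f_w(w)$ with rational endpoints. The sets $f_w^{-1}(J_w)$, $w\in F$, form an open cover of the compact set $F$, so finitely many of them, indexed by $w_1,\dots,w_k$, already cover $F$. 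Taking $f_i=f_{w_i}$ and $I_i=I_{w_i}$, I get $f_i(y)\in I_i$ for every $i$; and if $w\in K$ satisfies $f_i(w)\in I_i$ for all $i$, then $w$ cannot lie in $F$, for otherwise it would belong to some $f_{w_j}^{-1}(J_{w_j})$, forcing $f_{w_j}(w)\in I_{w_j}\cap J_{w_j}=\emptyset$. Hence such $w$ lies in $U$, which is exactly the reduced statement.

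I expect the only subtleties to be the careful bookkeeping around the two applications of $f\circ r=f$ (one to place $x$ in $N$, one to control $r[N]$) and the degenerate case $F=\emptyset$. The compactness step itself is routine once the separation hypothesis is applied pointwise against the fixed point $y$, and no further structure of $X$ beyond compactness of the range $r[X]$ is needed.
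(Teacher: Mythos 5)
Your proof is correct and follows essentially the same route as the paper's: for each $w\in r[X]\setminus U$ pick $f_w\in S$ separating $w$ from $r(x)$ together with disjoint rational intervals, extract a finite subcover of the compact set $r[X]\setminus U$ by the sets $f_w^{-1}(J_w)$, and use $f\circ r=f$ to see that the resulting finite intersection $\bigcap_i f_i^{-1}(I_i)\in\N(S)$ contains $x$ and is mapped by $r$ into $U$. The only differences are cosmetic: you make the reduction to the compact set $K=r[X]$ explicit and you treat the degenerate case $K\subset U$ separately, both of which the paper leaves implicit.
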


\begin{proof}Fix $y\in X$ and $U\in\tau(r(y),r(X))$. For any  $z\in r[X]\setminus U$ there is a function $f_z\in S$ such that $f_z(y)\ne f_z(z)$.
We can find disjoint open intervals $I_z$ and $J_z$ with rational endpoints such that $f_z(y)\in I_z$ and $f_z(z)\in J_z$. The open sets $f_z^{-1}(J_z)$, $z\in r[X]\setminus U$ cover the compact set $r[X]\setminus U$, so there are $z_1,\dots,z_k\in r[X]\setminus U$ such that $\bigcup_{i=1}^k f_{z_i}^{-1}(J_{z_i})\supset  r[X]\setminus U$. Then
\[
y\in \bigcap_{i=1}^k f_{z_i}^{-1}(I_{z_i})\subset r^{-1}[U],
\]
which completes the proof.
\end{proof}

We continue by the following lemma which we use to improve a bit the model provided by the assumption (ii).

\begin{lemma}\label{l:retrakce} Let $X$ be a countably compact space satisfying the assumption (ii) from 
Theorem~\ref{t:main}. Then for any suitable model $M$ the following holds:
\begin{itemize}
	\item[(i)] $\C(X)\cap M$ separates the points of $\overline{X\cap M}$.
	\item[(ii)] $\overline{X\cap M}$ is compact.
	\item[(iii)] There is a retraction $r:X\to X$ with $r[X]=\ov{X\cap M}$  such that $f\circ r= f$ for each
	$f\in\C(X)\cap M$. 
\end{itemize}
\end{lemma}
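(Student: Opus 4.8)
The plan is to begin by fixing, via the Convention, a list of formulas and a countable set $Y$ rich enough to serve three purposes at once: it should force assertion (ii) of Theorem~\ref{t:main} (so I include the formulas and the set that witness it), it should make $M$ well-behaved in the sense of Lemma~\ref{l:predp}, and it should contain $\qe$ together with a formula asserting the existence of a point of $X$ whose values under finitely many given functions fall into prescribed rational intervals. For any such suitable $M$, assertion (i) is then literally assertion (ii) of Theorem~\ref{t:main}, so nothing has to be done there. I would write $L=\ov{X\cap M}$, observe that it is closed in $X$ and hence countably compact (a closed subspace of a countably compact space is countably compact), and set $S=\C(X)\cap M=\{f_n\}_n$, a countable family which separates the points of $L$ by (i). The central object is the evaluation map $\Phi\colon X\to\er^\omega$, $\Phi(x)=(f_n(x))_n$, which is continuous and whose restriction to $L$ is injective.

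For (ii) I would show that $\Phi\restriction_L$ is a homeomorphism onto a compact metrizable space. First, $\Phi[L]$ is a countably compact subset of the metric space $\er^\omega$, hence compact. Next, $\Phi\restriction_L$ is a closed map: if $C\subset L$ is closed, then $C$ is countably compact, so $\Phi[C]$ is a countably compact, hence compact, hence closed subset of $\er^\omega$. A continuous closed injection is a homeomorphism onto its image, so $L\cong\Phi[L]$ is compact (and metrizable), which is (ii).

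The heart of the matter, and the step I expect to be the \emph{main obstacle}, is the inclusion $\Phi[X]\subset\Phi[L]$, from which (iii) then follows formally. To prove it, I fix $x\in X$ and try to realize $\Phi(x)$ as $\Phi(y)$ for some $y\in L$. Given finitely many $f_1,\dots,f_k\in S$ and rationals $p_i<q_i$ with $f_i(x)\in(p_i,q_i)$, consider the formula asserting the existence of $z\in X$ with $p_i<f_i(z)<q_i$ for all $i$ (coded, if one wants to keep the list of formulas finite, by a single formula quantifying over a finite sequence of functions and rational pairs). It is true, being witnessed by $x$, and all its parameters lie in $M$ because $\qe\subset M$ and $M$ is closed under the finite set-theoretic operations of Lemma~\ref{l:predp}. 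Hence by Lemma~\ref{l:unique-M} it reflects to a witness $z\in X\cap M\subset L$. Thus every basic neighbourhood of $\Phi(x)$ meets $\Phi[X\cap M]$, so $\Phi(x)\in\ov{\Phi[X\cap M]}\subset\Phi[L]$, the last inclusion holding because $\Phi[L]$ is closed.

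Finally, for (iii) I would put $r=(\Phi\restriction_L)^{-1}\circ\Phi\colon X\to L\subset X$, which is well defined and continuous since $\Phi[X]\subset\Phi[L]$ and $\Phi\restriction_L$ is a homeomorphism. By construction $\Phi(r(x))=\Phi(x)$, that is $f_n(r(x))=f_n(x)$ for every $n$, so $f\circ r=f$ for each $f\in S$; moreover $r$ fixes $L$ pointwise, whence $r[X]=L=\ov{X\cap M}$ and $r$ is a retraction with the required properties. The only genuinely delicate point is the reflection argument of the previous paragraph; everything else is a routine interplay between countable compactness and the metrizability supplied by the countable separating family $S$.
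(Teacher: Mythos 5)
Your proposal is correct and takes essentially the same route as the paper's proof: the evaluation map $\Phi$ into a metrizable cube indexed by the countable family $\C(X)\cap M$, the closed-map argument via countable compactness yielding (ii), the elementarity reflection showing $\Phi[X\cap M]$ is dense in $\Phi[X]$ (hence $\Phi[X]=\Phi[\ov{X\cap M}]$ since $\Phi$ is closed), and finally $r=\left(\Phi\restriction_{\ov{X\cap M}}\right)^{-1}\circ\Phi$. The only divergence is bookkeeping at the delicate reflection step: you place the finite sequences of functions and rational intervals into $M$ via the finite-set clause of Lemma~\ref{l:predp} together with $\qe\subset M$ (and, implicitly, $X\in M$, which should be stated as part of $Y$), whereas the paper obtains the same witnesses by absoluteness of a formula producing an onto map $\eta:n\to F$ in $M$ and by observing that $\zeta\in\B^{<\omega}$, a countable element of $M$, so $\zeta\in M$ --- both devices serve the identical purpose of keeping the formula list finite while making the parameters of the reflected formula elements of $M$.
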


\begin{proof} 
In this proof we will use the identification of any $n\in\omega$ with the set $\{0,\dots,n-1\}$.
Further, denote by $\B$ the set of all the open intervals with rational endpoints and by $\B^{<\omega}$ the set of all the functions whose domain is some $n\in\omega$ and whose values are in $\B$.

Fix formulas $\phi_1,\ldots,\phi_n$ containing all the formulas $\theta_1,\dots,\theta_m$ from Lemma \ref{l:predp}, the formulas provided by the assertion (ii) from Theorem~\ref{t:main} and the formulas (and their subformulas) marked by $(*)$ in the proof below, and a countable set $Y$ containing the set $Y_0$ from Lemma \ref{l:predp}, the set provided by the assertion (ii) from Theorem~\ref{t:main} and the set $\{X, \C(X), \B, \B^{<\omega}\}$. Fix $M\prec(\phi_1,...,\phi_n;\; Y)$.

Set $A=\C(X)\cap M$. By the assumptions $A$ separates points of $\ov{X\cap M}$, hence the assertion (i) is fulfilled. Let us define
the mapping $\Phi:X\to \er^A$ by the formula
\[
\Phi(x)(f)=f(x), \quad f\in A,x\in X.
\]
Then $\Phi$ is continuous, hence $\Phi[X]$ is countably compact. Since $A$ is countable, we deduce that $\Phi[X]$ is a metrizable compact.
Moreover, $\Phi$ is a closed mapping (any closed $F\subset X$ is countably compact, $\Phi[F]$ is then countably compact, hence compact and thus closed).
By the already proved condition (i) the mapping $\Phi$ is one-to-one when restricted to $\ov{X\cap M}$, so it is a homeomorphism of $\ov{X\cap M}$
onto its image. In particular, $\ov{X\cap M}$ is compact, which proves the assertion (ii).

The next step is to prove that $\Phi[X]=\Phi[\ov{X\cap M}]$. Since $\Phi$ is a closed mapping, it is enough to show that $\Phi[X\cap M]$ is
dense in $\Phi[X]$. To do that fix $x_0\in X$ and $U$ an open set in $\er^A$ containing $\Phi(x_0)$. It follows that there is a finite set
$F\subset A$ and intervals $I_f\in \B$, $f\in F$, such that
\[
\Phi(x_0)\in \{z\in \er^A\setsep z(f)\in I_f\mbox{ for }f\in F\}\subset U,
\]
which means
\[
x_0\in\{x\in X\setsep f(x)\in I_f\mbox{ for }f\in F\}\subset \Phi^{-1}(U).
\]
Since $F\subset A\subset M$ and $F$ is finite, Lemma~\ref{l:predp} yields $F\in M$. Further, by absoluteness of the formula
\[
\exists n\in\omega\; \exists \eta\quad (\eta \mbox{ is a mapping of $n$ onto }F)\eqno{(*)}
\]
and its subformulas, there is $n\in\omega$ and an onto mapping $\eta:n\to F$ in $M$. Let us further define mapping $\zeta:n\to\B$ by
$\zeta(i)=I_{\eta(i)}$. Since $\zeta\in\B^{<\omega}$, $\B^{<\omega}\in M$ and it is countable, it follows from Lemma~\ref{l:predp} that $\zeta\in M$.
Finally, by absoluteness of the formula
\[
\exists x\in X\; \forall i\in n \quad(\eta(i)(x)\in \zeta(i)) \eqno{(*)}
\]
and its subformulas, there is $x\in X\cap M$ such that $\eta(i)(x)\in\zeta(i)$ for each $i\in n$, in other words $f(x)\in I_f$ for each $f\in F$,
hence $\Phi(x)\in U$. This completes the proof that $\Phi[X\cap M]$ is dense in $\Phi[X]$, hence $\Phi[X]=\Phi[\ov{X\cap M}]$.

Finally, set $r=\left(\Phi\restriction_{\ov{X\cap M}}\right)^{-1}\circ \Phi$. It is clear that $r$ is a continuous retraction on $X$ with the range $\ov{X\cap M}$. Further, if $f\in A$ and $x\in X$, then
\[
(f\circ r)(x)=f(r(x))=\Phi(r(x))(f)=\Phi\left(\left(\Phi\restriction_{\ov{X\cap M}}\right)^{-1}\left( \Phi(x)\right)\right)(f)=\Phi(x)(f)=f(x),
\]
hence $f\circ r=f$, which completes the proof of (iii).
\end{proof}

Finally, we give the proof of the missing implication of Theorem~\ref{t:main}.

\smallskip

Let us assume that the assertion (ii) holds.  We will show that (iii) holds as well.  
For any countable $S\subset \C(X)$ let us define $\N(S)$ by the formula given in  Lemma~\ref{l:networkForRetract}. 
Note that the assignment $S\mapsto \N(S)$ is $\omega$-monotone. 
Let a subformula closed list of formulas $\phi_1,\ldots,\phi_n$ and a countable set $Y$ be the ones provided by Lemma~\ref{l:retrakce}. 
By Theorem~\ref{T:countable-model} there exists a set $R\supset X\cup Y$ such that $\phi_1,\ldots,\phi_n$ are absolute for $R$.
Let $\psi$ be a Skolem function for $\phi_1,\ldots,\phi_n$ and $R$; see Lemma~\ref{l:BasicSkolem}. For every $A\in[X]^{\leq\omega}$, we put $M(A) = \psi(A\cup Y)$. By Lemma~\ref{l:BasicSkolem}, $M(A)\prec(\phi_1,\ldots,\phi_n;Y)$, $M(A)\supset A$ and the assignment $A\mapsto M(A)$ is $\omega$-monotone. Let $r_A$ be the retraction assigned to $M(A)$ by Lemma~\ref{l:retrakce} and $\O(A) = \N(\C(X)\cap M(A))$. Then $\O(A)$ is a countable network of $r_A$ and $A\subset M(A)\cap X\subset \ov{M(A)\cap X} = r_A[X]$. Finally, the assignment $A\mapsto \O(A)$ is $\omega$-monotone because it is a composition of $\omega$-monotone mappings $A\mapsto \C(X)\cap M(A)$ and $\N$. This completes the proof.

\section{A function-space characterization of compact spaces with retractional skeleton}

The aim of this section is to prove Theorem~\ref{t:answerCuth} and Theorem~\ref{t:answerKubis}. In fact, instead of the latter we prove a more precise version, namely Theorem~\ref{t:skeletonIffXSokolov} below. 

Let us start with Theorem~\ref{t:answerCuth}. It answers \cite[Problem 1]{cuthCMUC} and can be viewed as a noncommutative analogue of \cite[Theorem 2.1]{kalendaCharact}. Let us comment it a bit. Let $K$ be a compact space and $D\subset K$ dense subset. Then $D$ is induced by a commutative retractional skeleton in $K$ (here, \emph{commutative} means that each two projections from the skeleton commute, not only the compatible pairs) 
if and only if $D$ is a ``$\Sigma$-subset'' of $K$; see e.g. the \cite[p. 56]{cuthCMUC}. In \cite[Theorem 2.1]{kalendaCharact} it is proved that $D$ is a $\Sigma$-subset of $K$ if and only if $D$ is countably compact and $(\C(K),\tau_p(D))$ is primarily Lindel\"of (see {\cite[Definition 1.2]{kalendaCharact}}). To characterize sets induced by a possibly noncommutative retractional skeleton, we replace the property to be primarily Lindel\"of by the monotonical Sokolov property.

\begin{proof}[Proof of Theorem \ref{t:answerCuth}]
Assume that $D$ is induced by a retractional skeleton in $K$. One can notice that, by Fact~\ref{f:basics}, $D$ is countably compact and $K=\beta D$.
Hence, $D$ has a full retractional skeleton (by the trivial implication of Proposition~\ref{p:rSkeletonInBetaX}). By Theorem~\ref{t:main}, $D$ is monotonically retractable; hence, by Fact~\ref{f:mrs}, $\C_p(D)$ is monotonically Sokolov. Taking into account that $\beta D = K$, $\C_p(D)$ is homeomorphic to $(\C(K),\tau_p(D))$. Thus, $(\C(K),\tau_p(D))$ is monotonically Sokolov.

For the converse implication, let us assume that $D$ is countably compact and $(\C(K),\tau_p(D))$ is monotonically Sokolov. 
By Fact~\ref{f:mrs} the space $\C_p(\C(K),\tau_p(D))$ is monotonically retractable. The mapping $\Psi:D\to\C_p(\C(K),\tau_p(D))$
defined by
\[
\Psi(d)(f)=f(d),\quad f\in \C(K), d\in D
\]
is continuous (by the very definition of the respective topologies) and one-to-one. Further, for any closed
$F\subset D$ its image $\Psi(F)$ is countably compact and hence closed in $\C_p(\C(K),\tau_p(D))$ (by Fact~\ref{f:mrs}). It follows that
$\Psi$ is a homeomorphism of $D$ onto a closed subset of $\C_p(\C(K),\tau_p(D))$, thus $D$ is monotonically retractable. So, by Theorem~\ref{t:main} that $D$ has a full retractional skeleton. It follows that $D$ is countably closed in $K$ (it follows from the definitions that the closure in $D$ of any countable subset of $D$ is compact). Further, $(\C(K),\tau_p(D))$ is Lindel\"of by Fact~\ref{f:mrs}. Hence, by \cite[Proposition 2.13]{kalendaCharact}, $\beta D = K$.  By Proposition~\ref{p:rSkeletonInBetaX}, $D$ is induced by a retractional skeleton in $K$.\end{proof}

Let us now formulate the more precise version of Theorem~\ref{t:answerKubis} which we will prove. It is the following theorem which can be viewed 
as a noncommutative version of \cite[Theorem 2.7]{kalendaSurvey} (which is a precise formulation of \cite[Theorem 2.3]{kalendaCharact}).

\begin{theorem}\label{t:skeletonIffXSokolov} Let $E$ be a Banach space and $D$ a dense subset of $(B_{E^*},w^*)$. Then the following are equivalent:
\begin{enumerate}[\upshape (i)]
	\item $\sspan(D)$ is induced by a 1-projectional skeleton in $E$ and $\sspan(D)\cap B_{E^*} = D$.
	\item $D$ is a convex symmetric set induced by a retractional skeleton in $(B_{E^*},w^*)$.
	\item $D$ is weak$^*$ countably compact and $(E,\sigma(E,D))$ is monotonically Sokolov.
\end{enumerate}
\end{theorem}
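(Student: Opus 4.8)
The plan is to prove the cyclic chain of implications (i) $\Rightarrow$ (ii) $\Rightarrow$ (iii) $\Rightarrow$ (i), leaning heavily on the duality between projectional and retractional skeletons and on Theorem~\ref{t:main}, which links the retractional skeleton property on $D$ with monotone retractability. The overarching strategy is to reduce each implication to results already available: Theorem~\ref{t:answerCuth} for the compact space $K=(B_{E^*},w^*)$, Fact~\ref{f:mrs} for the equivalence between monotone retractability and the monotone Sokolov property of the associated function space, and the known fact that a $1$-projectional skeleton in $E$ is dual to a retractional skeleton in $(B_{E^*},w^*)$.

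For the implication (i)$\Rightarrow$(ii), I would invoke the duality theory of skeletons (from \cite{kubisSkeleton} or \cite{cuthSimul}): a $1$-projectional skeleton $\{P_s\}$ in $E$ gives rise to the adjoint retractions $R_s = P_s^*\restriction_{B_{E^*}}$ on the weak$^*$ compact ball $(B_{E^*},w^*)$, and these form a retractional skeleton whose induced set is precisely $\sspan(D)\cap B_{E^*} = D$. The convexity and symmetry of $D$ are inherited from the fact that $D$ equals the intersection of the norming subspace $\sspan(D)$ with the ball. For (ii)$\Rightarrow$(iii), I would apply Theorem~\ref{t:answerCuth} to $K=(B_{E^*},w^*)$ and the dense subset $D$: it yields that $D$ is weak$^*$ countably compact (by Fact~\ref{f:basics}) and that $(\C(K),\tau_p(D))$ is monotonically Sokolov. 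The point is then to transfer this to $(E,\sigma(E,D))$. Here I would use that $E$ embeds canonically into $(\C(B_{E^*},w^*),\tau_p(D))$ via the evaluation map $x\mapsto (x^*\mapsto x^*(x))$, and that this embedding is onto a closed subspace (using convexity of $D$ and the Hahn–Banach separation to identify the image), so that the monotone Sokolov property descends to the closed subspace $(E,\sigma(E,D))$ via Fact~\ref{f:mrs}(c).

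For the remaining implication (iii)$\Rightarrow$(i), the plan is to run the converse of Theorem~\ref{t:answerCuth}-style argument. From $(E,\sigma(E,D))$ being monotonically Sokolov, Fact~\ref{f:mrs} gives that the associated function space is monotonically retractable, and through the evaluation embedding of $D$ into $\C_p(E,\sigma(E,D))$ — whose image is closed because $D$ is weak$^*$ countably compact and countably compact subsets of monotonically retractable spaces are closed (Fact~\ref{f:mrs}(h)) — one obtains that $D$ is monotonically retractable. Theorem~\ref{t:main} then yields a full retractional skeleton on $D$, and Proposition~\ref{p:rSkeletonInBetaX} together with the identification $\beta D = K$ (via \cite[Proposition 2.13]{kalendaCharact}) produces a retractional skeleton in $(B_{E^*},w^*)$ inducing $D$. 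The final step is to dualize this retractional skeleton back to a $1$-projectional skeleton in $E$, again using the skeleton duality, and to verify $\sspan(D)\cap B_{E^*}=D$.

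The main obstacle I anticipate is the careful bookkeeping in the duality step, particularly ensuring that the adjoint retractions respect convexity and symmetry and that the norming constant is exactly $1$ (giving a $1$-projectional skeleton rather than an $r$-projectional one). The subtle point is that $D$ being \emph{convex and symmetric} is what allows $\sspan(D)$ to be recovered cleanly and the intersection condition $\sspan(D)\cap B_{E^*}=D$ to hold; verifying that the retractional skeleton's ranges are weak$^*$ compact convex symmetric neighborhoods compatible with the linear structure of $E$ is where most of the technical care must go. The transfer of the monotone Sokolov property between $(E,\sigma(E,D))$ and the function space over $K$ via evaluation maps is conceptually routine but requires confirming the embeddings land on closed subspaces, for which the countable compactness (respectively countable closedness) hypotheses on $D$ are exactly what is needed.
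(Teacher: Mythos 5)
Your implications (i)$\Rightarrow$(ii) and (ii)$\Rightarrow$(iii) follow the paper's route essentially verbatim (duality of skeletons via Lemma~\ref{l:projectionalIffRetractional}, then Theorem~\ref{t:answerCuth} for $K=(B_{E^*},w^*)$ together with the $\tau_p(D)$-closedness of the image of the evaluation embedding $I:E\to\C(K)$ and Fact~\ref{f:mrs}(c)). The genuine gap is in (iii)$\Rightarrow$(i), at the step where you claim $\beta D=B_{E^*}$ ``via \cite[Proposition 2.13]{kalendaCharact}''. That proposition, as used in the proof of Theorem~\ref{t:answerCuth}, needs the Lindel\"of property of the \emph{whole} function space $(\C(K),\tau_p(D))$. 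But hypothesis (iii) only gives that $(E,\sigma(E,D))$ is monotonically Sokolov, hence Lindel\"of; under the evaluation embedding, $E$ corresponds merely to the subspace of $\C(B_{E^*})$ formed by restrictions of weak$^*$-continuous linear functionals, not to all of $\C(B_{E^*})$. One cannot upgrade this to Lindel\"ofness of $(\C(K),\tau_p(D))$ without circularity: the natural attempt (restricting $\C(K)$ into $\C_p(D)$, which is monotonically Sokolov once $D$ is monotonically retractable) requires knowing the image is closed, which amounts to knowing $\beta D=K$ in the first place. This is precisely why the paper proves the separate Lemma~\ref{l:kalendaBetaA} (the analogue of \cite[Lemma 2.18]{kalendaCharact}): it derives $\beta A=B_{E^*}$ from the Lindel\"ofness of $(E,\sigma(E,A))$ alone, but in exchange it needs $A$ to be \emph{convex and symmetric} and to have a full retractional skeleton, and its proof requires checking several stability properties of sets induced by retractional skeletons (countable closedness, behavior under relative closures and finite products).

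This exposes a second, related omission in your sketch: in (iii) the set $D$ is not assumed convex or symmetric, so before any convexity-based lemma can be applied you must prove the identity $\sspan(D)\cap B_{E^*}=D$, which you defer to an unspecified ``final verification''. The paper establishes it early and cheaply: once $D$ is shown closed in $\C_p(E,\sigma(E,D))$ (via Fact~\ref{f:mrs}(h), exactly as you argue), one observes that $D$ is dense in $A=\sspan(D)\cap B_{E^*}$ (because $D$ is weak$^*$ dense in $B_{E^*}$) and the inclusion of $(\sspan(D),w^*)$ into $\C_p(E,\sigma(E,D))$ is a homeomorphism, whence $D=A$; only then is $A$ known to be convex symmetric with a full retractional skeleton (Theorem~\ref{t:main}), so that Lemma~\ref{l:kalendaBetaA} applies and Proposition~\ref{p:rSkeletonInBetaX} plus Lemma~\ref{l:projectionalIffRetractional} finish the proof. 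So the corrected order is: closedness of $D$, then $D=\sspan(D)\cap B_{E^*}$, then Lemma~\ref{l:kalendaBetaA} in place of \cite[Proposition 2.13]{kalendaCharact}; as written, your appeal to the latter is a step that would fail.
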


A \emph{projectional skeleton} in a Banach space $E$ is an indexed system of bounded linear projections on $E$ with the properties of a retractional skeleton, except for the first property which is replaced by the assumption that the projections have separable ranges. By metrizability the last condition implies that the ranges of projections cover $E$. A $1$-projectional skeleton is a projectional skeleton formed by norm one projections.
The subspace generated by a projectional skeleton is the union of ranges of adjoint projections. The adjoint projections of a $1$-projectional skeleton on $E$ form a retractional skeleton on $(B_{E^*},w^*)$. For exact definitions and explanations see \cite{kubisSkeleton} or \cite{cuthSimul}.
We will deal with projectional skeletons via retractional skeletons using the following lemma. In its proof we use the notation from \cite{kubisSkeleton}.

\begin{lemma}\label{l:projectionalIffRetractional}Let $E$ be a Banach space and $D\subset E^*$ a 1-norming subspace of $E^*$. Then $D$ is induced by a 1-projectional skeleton if and only if $D\cap B_{E^*}$ is induced by a retractional skeleton in $(B_{E^*},w^*)$.
\end{lemma}
\begin{proof} The `only if part' is easy and is proved in \cite[Theorem 4.2]{cuthSimul}. Let us prove the `if part'. Suppose that
$D\cap B_{E^*}$ is induced by a retractional skeleton in $(B_{E^*},w^*)$. By \cite[Theorem 4.2]{cuthSimul}, $D$ is a subset of a set $D(\mathfrak{s})$ induced by a 1-projectional skeleton. Thus, by the `if part', $D(\mathfrak{s})\cap B_{X^*}$ is induced by a retractional skeleton. We have $D\cap B_{X^*}\subset D(\mathfrak{s})\cap B_{X^*}$, both induced by a retractional skeleton. Consequently, by \cite[Lemma 3.2]{cuthCMUC}, $D\cap B_{X^*} = D(\mathfrak{s})\cap B_{X^*}$ and $D = D(\mathfrak{s})$ is induced by a 1-projectional skeleton.
\end{proof}

Now we proceed to the proof of  Theorem~\ref{t:skeletonIffXSokolov}. We follow the line of the proof of \cite[Theorem 2.3]{kalendaCharact}; instead of ``$\Sigma$-subset'' we use the notion of a set induced by a retractional skeleton and instead of ``homeomorphic to a closed coordinatewise bounded subset of some $\Sigma(\Gamma)$'' we use spaces with a full retractional skeleton. Thus, some technical details must be handled in a slightly different way. Namely, we need the following analogue of \cite[Lemma 2.18]{kalendaCharact}.

\begin{lemma}\label{l:kalendaBetaA}Let $X$ be a Banach space and $A\subset (B_{E^*},w^*)$ be a dense, convex and symmetric set with a full retractional skeleton. If $(E,\sigma(E,A))$ is Lindel\"of, then $B_{E^*} = \beta A$.
\end{lemma}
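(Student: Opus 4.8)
The plan is to reduce the statement to the criterion of Lemma~\ref{l:generskel} applied to the compact space $K=(B_{E^*},w^*)$ together with its dense subset $A$, and then to invoke Fact~\ref{f:basics}(iii). Since $A$ carries a full retractional skeleton it is countably compact, and by hypothesis it is dense in $K$; hence Lemma~\ref{l:generskel} is applicable, and it suffices to verify its condition (ii): for any suitable model $M$ the family $\C(K)\cap M$ separates the points of $\ov{A\cap M}^{w^*}$. Once this is established, $A$ is induced by a retractional skeleton in $K$, and as $K$ is compact, Fact~\ref{f:basics}(iii) yields $K=\beta A$, which is exactly the desired conclusion.

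To verify condition (ii) I would fix a full retractional skeleton $\{r_s\}_{s\in\Gamma}$ on $A$ and work with suitable models $M$ containing (codes for) this skeleton, the directed set $\Gamma$, the map $s\mapsto r_s$, the operation assigning to an increasing sequence in $\Gamma$ its supremum, and the evaluation map $x\mapsto\widehat x$, $x\in E$, where $\widehat x\in\C(K)$ is given by $\widehat x(x^*)=x^*(x)$. For such $M$ and any $a\in A\cap M$, fullness of the skeleton gives $s\in\Gamma$ with $r_s(a)=a$; by absoluteness of the formula $\exists s\in\Gamma\colon r_s(a)=a$ and Lemma~\ref{l:unique-M}, such an $s$ may be found in $\Gamma\cap M$. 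Thus $A\cap M\subset\bigcup\{r_s[A]\setsep s\in\Gamma\cap M\}$. As $\Gamma\cap M$ is countable and $M$ is closed under finite joins and under suprema of increasing sequences, I can build an increasing sequence cofinal in $\Gamma\cap M$ and take $t\in\Gamma\cap M$ to be its supremum (property (iii) of the skeleton). Then $s\le t$ for every $s\in\Gamma\cap M$, so $r_s[A]\subset r_t[A]$ for all such $s$ (using $r_t\circ r_s=r_s$), and hence $A\cap M\subset r_t[A]$. Since $r_t[A]$ is $w^*$-compact by property (i), it is closed in $K$, whence $\ov{A\cap M}^{w^*}\subset r_t[A]$.

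It then remains to separate the points of the metrizable compact $r_t[A]$ by functions from $\C(K)\cap M$. Being metrizable compact, $r_t[A]$ is second countable, and the functions $\widehat x$, $x\in E$, separate the points of $K$ because $E$ separates $E^*$; consequently a countable subfamily of $\{\widehat x\setsep x\in E\}$ already separates the points of $r_t[A]$. Since $r_t\in M$ gives $r_t[A]=\rng(r_t)\in M$ by Lemma~\ref{l:predp}, the absoluteness of the assertion that there exists a countable subset of $E$ whose associated evaluations separate the points of $r_t[A]$ lets me choose such functionals inside $M$; as $x\mapsto\widehat x$ belongs to $M$, the corresponding $\widehat x$ lie in $\C(K)\cap M$. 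These countably many functions separate the points of $r_t[A]\supset\ov{A\cap M}^{w^*}$, establishing condition (ii) of Lemma~\ref{l:generskel}. In this route the convexity and symmetry of $A$ and the Lindel\"of property of $(E,\sigma(E,A))$ enter only through the ambient theorem; a more classical argument in the spirit of \cite[Lemma 2.18]{kalendaCharact} would instead exploit the Lindel\"of hypothesis directly to produce the separating functionals. The main obstacle is the elementary-model bookkeeping of the second paragraph: one must arrange the list of formulas and the countable parameter set so that both the dominating index $t$ and the countable separating family can be found inside $M$, which is precisely where closure of $M$ under suprema of increasing chains in $\Gamma$ and under the map $x\mapsto\widehat x$, together with the absoluteness lemmas of Section~3, is indispensable.
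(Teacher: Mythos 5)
Your reduction to Lemma~\ref{l:generskel} plus Fact~\ref{f:basics}(iii) is a reasonable target, but the verification of condition (ii) breaks down exactly where you claim $t\in\Gamma\cap M$. The index $t$ is the supremum of an increasing sequence that you construct \emph{externally} so as to be cofinal in $\Gamma\cap M$; that sequence is not an element of $M$, so Lemma~\ref{l:predp} gives you no closure and nothing forces $t\in M$. In fact $t\in M$ is generally impossible: if $t\in\Gamma\cap M$ dominated all of $\Gamma\cap M$ it would be the maximum of $\Gamma\cap M$, whereas (whenever $\Gamma$ has no maximal element, the typical case) absoluteness produces for each $s\in\Gamma\cap M$ some $s'\in\Gamma\cap M$ with $s'>s$, so no such maximum exists. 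The containment $\ov{A\cap M}\subset r_t[A]$ survives with $t\notin M$, but your final step collapses: since $r_t\notin M$, you cannot put $\rng(r_t)$ into $M$, and hence cannot use Lemma~\ref{l:unique-M} to find inside $M$ a countable family of evaluations $\widehat{x}$ separating the points of $r_t[A]$. This is not bookkeeping; it is the missing mathematical content. Elementarity does give that $E\cap M$ separates the points of $A\cap M$, but it may fail to separate points of the \emph{closure} $\ov{A\cap M}$, and nothing in your argument excludes this.

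The gap is unrepairable within your scheme, because you use only that $A$ is a dense countably compact subset of a compact space carrying a full retractional skeleton -- you explicitly set aside convexity, symmetry, and the Lindel\"of property of $(E,\sigma(E,A))$, remarking that they ``enter only through the ambient theorem''; but this lemma \emph{is} the place where those hypotheses must work. If your argument were valid, it would show that \emph{every} compactification $K$ of \emph{every} countably compact space $A$ with a full retractional skeleton satisfies $K=\beta A$ (via Lemma~\ref{l:generskel} and Fact~\ref{f:basics}(iii)), and this is false: let $A$ be the $\Sigma$-product in $\{0,1\}^{\omega_1}$ (which has a full retractional skeleton and $\beta A=\{0,1\}^{\omega_1}$) and let $K'$ be the quotient of $\{0,1\}^{\omega_1}$ identifying two points of uncountable support; then $A$ embeds densely in the compact space $K'$, yet $K'\neq\beta A$, so condition (ii) of Lemma~\ref{l:generskel} must fail there for suitable models. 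The paper's proof proceeds quite differently: it runs the argument of \cite[Lemmas 2.17 and 2.18]{kalendaCharact}, in which the Lindel\"of property of $(E,\sigma(E,A))$ is used to extend bounded continuous functions from $A$ to $B_{E^*}$ and the convexity and symmetry of $A$ enter the functional-analytic steps, with the skeleton-theoretic substitutes (countable closedness, Proposition~\ref{p:rSkeletonInBetaX}, stability of induced sets under relative closures and finite products) replacing the $\Sigma$-subset properties used there.
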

\begin{proof}
The proof is identical with the proof of \cite[Lemma 2.18]{kalendaCharact} which goes through a technical \cite[Lemma 2.17]{kalendaCharact}. 
There, instead of a set with a full retractional skeleton (resp. a set induced by a retractional skeleton), set ``homeomorphic to a closed coordinatewise bounded subset of some $\Sigma(\Gamma)$'' (resp. ``$\Sigma$-subset'') is considered. Thus, it is enough to observe that sets with a full retractional skeleton (resp. sets induced by a retractional skeleton) have the topological properties needed in the proofs. 

Namely, it is enough to use the following properties:
\begin{itemize}
	\item If $D$ has a full retractional skeleton, is is countably closed in each superspace, in particular $A$ is countably closed in $(B_{E^*},w^*)$.
	\item If $D$ has a full retractional skeleton, it is induced by a retractional skeleton in $\beta D$ (Proposition~\ref{p:rSkeletonInBetaX}).
	\item If $D$ is induced by a retractional skeleton in a compact space $K$ and $F\subset D$ is relatively closed, then $F$ is induced by a retractional skeleton in $\ov{F}^K$ (\cite[Lemma 3.5]{cuthCMUC}) and hence $\ov{F}^K=\beta F$ (Fact~\ref{f:basics}).
	\item If $D_i$ are sets induced by a retractional skeleton in compact spaces $K_i$ for $i = 1,\ldots,n$, then $D_1\times\ldots\times D_n$ is induced by a retractional skeleton in $K_1\times\ldots\times K_n$ (see the proof of \cite[Theorem 31]{kubisSkeleton}). 
\end{itemize}
\end{proof}

\begin{proof}[Proof of Theorem \ref{t:skeletonIffXSokolov}] Implication (i)$\Rightarrow$(ii) follows immediately from Lemma~\ref{l:projectionalIffRetractional}. Let us assume that (ii) holds. By Fact~\ref{f:basics}, $D$ is weak$^*$ countably compact. Put $K = (B_{E^*},w^*)$. By Theorem~\ref{t:answerCuth}, $(\C(K),\tau_p(D))$ is monotonically Sokolov. Consider the $\sigma(E,D)$-$\tau_p(D)$ homeomorphism $I:E\to\C(B_{E^*},w^*)$ defined by $I(x)(x^*) = x^*(x)$, $x\in E$, $x^*\in E^*$. It is a standard fact, see e. g. \cite[Lemma 4.4]{cuthSimul}, that $I(E)$ is $\tau_p(D)$-closed in $\C(B_{E^*},w^*)$. By Fact~\ref{f:mrs} $I(E)$ is monotonically Sokolov and hence $(E,\sigma(E,D))$ is monotonically Sokolov.

It remains to prove (iii)$\Rightarrow$(i). Let us assume that (iii) holds. Since $(E,\sigma(E,D))$ is monotonically Sokolov, $\C_p(E,\sigma(E,D))$ is monotonically retractable due to Fact~\ref{f:mrs}. Observe that $\sspan(D)\subset \C(E,\sigma(E,D))$ and that the inclusion map $i:(\sspan(D),w^*)\to \C_p(E,\sigma(E,D))$ is a homeomorphism. Since $D$ is weak$^*$ countably compact, it is a countably compact subset of $\C_p(E,\sigma(E,D))$,
so $D$ is closed in the latter space by Fact~\ref{f:mrs}. Hence $D$ is monotonically retractable. If we put $A = \sspan(D)\cap B_{E^*}\subset\C_p(E,\sigma(E,D))$, then $D$ is dense in $A$ (since it is weak$^*$ dense in $B_{E^*}$). It follows that $D=A$.
By Theorem~\ref{t:main} $A$ has a full retractional skeleton. By Lemma~\ref{l:kalendaBetaA}, $\beta A = B_{E^*}$ and hence $A$
is induced by a retractional skeleton in $B_{E^*}$ by Proposition~\ref{p:rSkeletonInBetaX}. Finally, by Lemma~\ref{l:projectionalIffRetractional}, $\sspan(D)$ is induced by a 1-projectional skeleton in $E$.
\end{proof}

We finish by showing how Theorem~\ref{t:answerKubis} follows from Theorem~\ref{t:skeletonIffXSokolov}. Similarly as above, for details concerning projectional skeletons we refer to \cite{kubisSkeleton} where all the fact needed in the proof bellow may be found.

\begin{proof}[Proof of Theorem \ref{t:answerKubis}]
Let $\langle E,\|\cdot\|\rangle$ be a Banach space and $D\subset E^*$ a norming subspace. Then there is an equivalent norm $|\cdot|$ on $X$ such that $D$ becomes 1-norming; see e. g. \cite[Proposition 1]{kubisSkeleton}. Then $D\cap B_{\langle E,|\cdot|\rangle^*}$ is weak$^*$ dense in $B_{\langle E,|\cdot|\rangle^*}$. So,
it follows from Theorem~\ref{t:skeletonIffXSokolov} that $D$ is induced by a $1$-projectional skeleton in $E$ if and only $D\cap B_{\langle E,|\cdot|\rangle^*}$
is weak$^*$-countably compact and $(E,\sigma(E,D))$ is monotonically Sokolov. Since the topology $\sigma(E,D)$ does not depend on the choice of an equivalent norm and any subspace induced by a projectional skeleton is weak$^*$ countably closed, the assertion (ii) holds if and only if $D$ is induced by a $1$-projectional skeleton in $\langle E,|\cdot|\rangle$. 

Now, it is enough to show that $D$ is induced by a projectional skeleton in $\langle E,\|\cdot\|\rangle$ if and only if $D$ is induced by a 1-projectional skeleton in $\langle E,|\cdot|\rangle$. If $D$ is induced by a 1-projectional skeleton in $\langle E,|\cdot|\rangle$,
the same system of projections is a projectional skeleton in $\langle E,\|\cdot\|\rangle$ and induces $D$.
Conversely, let $D$ be induced by a projectional skeleton in $\langle E,\|\cdot\|\rangle$.  By \cite[Theorem 15]{kubisSkeleton} then $D$ generates projections in $E$ and there exists a 1-projectional skeleton $\mathfrak{s}$ in $\langle E,|\cdot|\rangle$ such that $D\subset D(\mathfrak{s})$. 
Since the projectional skeleton in $\langle E,\|\cdot\|\rangle$ inducing $D$ remains to be a projectional skeleton in $\langle E,|\cdot|\rangle$,
\cite[Corollary 19]{kubisSkeleton} implies $D= D(\mathfrak{s})$.
\end{proof}

\def\cprime{$'$}

\end{document}